\def\E{\mathbb E}
\def\P{\mathbb P}
\def\R{\mathbb R}
\def\N{\mathbb N}
\def\Bc{\mathcal B}
\def\Fc{\mathcal F}
\def\Hc{\mathcal H}
\def\Lc{\mathcal L}
\def\Mc{\mathcal M}
\def\Nc{\mathcal N}
\newtheorem{theorem}{Theorem}[section]
\newtheorem{proposition}[theorem]{Proposition}
\newtheorem{lemma}[theorem]{Lemma}
\title[Buffer-Hawkes Process]{A Buffer Hawkes Process for Limit Order Books}
\author{Ingemar Kaj
  \and  Mine Caglar}
\thanks{I. Kaj, Dept.\ of Mathematics, Uppsala University, Uppsala,
    Sweden ({\tt ikaj@math.uu.se})} 
 \thanks{M. Caglar, Dept.\ of Mathematics, Koc University, Istanbul, Turkey
     ({\tt mcaglar@ku.edu.tr})}
\date{October 10, 2017}
\begin{document}

%\title[Buffer-Hawkes Process]{A Buffer Hawkes Process for Limit Order Books}

\begin{abstract}
  We introduce a Markovian single point process model, with random
  intensity regulated through a buffer mechanism and a self-exciting
  effect controlling the arrival stream to the buffer. The model
  applies the principle of the Hawkes process in which point process
  jumps generate a shot-noise intensity field. Unlike the Hawkes
  case, the intensity field is fed into a separate buffer, the size of
  which is the driving intensity of new jumps. In this manner, the
  intensity loop portrays mutual-excitation of point process events
  and buffer size dynamics.  This scenario is directly applicable to
  the market evolution of limit order books, with buffer size being
  the current number of limit orders and the jumps representing the
  execution of market orders. We give a branching process
  representation of the point process and prove that the scaling limit
  is Brownian motion with explicit volatility.
\end{abstract}

\keywords{ 
Self-exciting, market price, order book, branching process}

\subjclass[2010]{60G55, 91G80, 60J80}

\maketitle

\section{Introduction}

The self-exciting point process known as the Hawkes process is a
highlighted model of choice in a number of recent mathematical studies
of limit order books (LOB) \cite{abergelbook2016}. It represents the
dependence of the interarrival times of market events in order to
match the empirical order flow and spread dynamics \cite{lobsurvey}.
The point of view behind the approach is that the execution events of
bids and calls when orders are removed from the book trigger an
increase of the rate at which new orders enter the limit order book.
In this paper, we construct and analyze a more general self-exciting
process which aims at targeting more directly the basic dynamics of
limit orders and market orders. It involves the Markovian Hawkes
process in part, and captures mutual-excitation observed in limit
order books in a nonstandard way compared with multidimensional Hawkes
processes.

A large and growing literature is devoted to application of Hawkes
processes in finance \cite{survey}. The Markovian Hawkes process is a
process $(\Lambda_t,N_t)$, such that $N_t$ is a counting process and
$\Lambda_t$ has exponentially decreasing paths and positive jumps
generated by $N_t$ according to
\begin{equation}\label{LambdaNrelation}
\Lambda_t=\lambda_0+\int_0^t ae^{-b(t-s)}\,dN_s, \quad t\ge 0,
\end{equation}
with $\lambda_0>0$ and $a,b$, $0<a<b$, parameters, and furthermore,
$\Lambda_t$ is the stochastic intensity of $N_t$.  The cluster
representation due to Hawkes and Oakes \cite{hawkesoakes1974} reveals
the close link between self-exciting processes and branching models.
In this view, $\lambda_0$ is the intensity of arriving immigrants and
$\xi(ds)=ae^{-b s}\,ds$ is the intensity of a Poisson offspring
distribution on the positive half line of a subcritical branching
process with mean offspring $\nu_0=\int_{\R_+} \xi(ds)=a/b<1$, which
counts additional events due to the internal feedback. The Hawkes
process is the aggregate overlay of independent branching processes
initiated by each immigrant at the time of arrival.

In finance and other applications, processes are studied
in greater generality than the exponential shot noise in
(\ref{LambdaNrelation}). Intensity processes of the form
\begin{equation} \label{hawkes}
\Lambda_t = \Lambda_0 +\int_0^t \phi(t-s)\,
dN_s=\Lambda_0+\sum_{s_i\le t} \phi(t-s_i),
\end{equation}
where $\phi$ is a suitable nonnegative function and $\Lambda_0$ is
random or constant, defines a wider class of linear Hawkes models with
self-excited event times $s_1,s_2,\dots$, \cite{bremaudmass1996}.
Order book modeling and financial data analysis as a rule apply
multivariate point processes $\{(s_i,d_i),i\ge 1\}$, where $d_i$ is
the component index of a vector valued counting process
$N_t=(N^{1}_t,\dots,N_t^{n})$ with intensity
$\Lambda_t=(\Lambda_t^1,\dots,\Lambda_t^n)$ such that
\[
\Lambda_t^j=\Lambda_0^j+\sum_{i=1}^n\int_0^t
\phi_{ij}(t-s)\,dN_s^i,\quad 1\le j\le n.
\]
In LOB context, typically, the Hawkes point process $N^j$ would count
limit order arrivals or market order executions of asset $j$ while the
intensities $\Lambda^j$ control the rate of those orders. An
enlargement of this model involves another set of Hawkes processes for
the limit orders of assets $j=1,\ldots ,n$ and allows for mutual
excitation between different types of orders as well, see
e.g. \cite{abergelbook2016}.  As for the long time behavior of the
univariate model the strong law of large numbers is $N_t/t\to
(1-\nu_0)^{-1}$, almost surely, as $t\to \infty$ and the functional
central limit theorem is the weak convergence
\[
\frac{1}{\sqrt{m}}\Big(N_{mt}-\frac{1}{1-\nu_0} mt\Big)
\Rightarrow \sigma\, B_t,\quad m\to\infty,\quad
\sigma^2=\frac{1}{(1-\nu_0)^3},
\]
where $B$ is Brownian motion. Analogous results for multivariate
Hawkes models are established in \cite{bacryetal2013} under the
assumption that the matrix $(\Phi_{ij})$, $\Phi_{ij}=\int_0^\infty
\phi_{ij}(t)\,dt$, has spectral radius strictly less than 1.

An important feature captured by Hawkes processes is clustering in
time, which is a well observed empirical fact for order arrivals
\cite{quantfinance2011, Zaatour}. This is due to the self-exciting
property, that is, the jump intensity increases with the process
itself.  Another feature of order flows as demonstrated by several
studies is mutual excitation. Market orders excite limit orders, and
limit orders that change the price excite market orders
\cite{eisleretal2012,AberJedi2015}.  Aiming at a mechanism for reproducing such
stylized facts, we propose an intertwined model of
\begin{compactitem}
\smallskip
\item market order arrivals $N$, with intensity proportional to existing limit orders $\Gamma$;
\item limit order arrivals $L$, with intensity $\Lambda$ excited by market orders;
\item  cancellation of an arriving limit order,
\smallskip
\end{compactitem}
in the LOB for a single asset. We show that all these events come together in a coherent and analytically tractable way. The model represents the contagion of the limit orders and the market orders from each other.

Explicitly, our approach is to consider a Markov process
$$X_t=(\Lambda_t,\Gamma_t,N_t)$$
where $\Lambda$ and $N$ are still linked by relation
(\ref{LambdaNrelation}), but where $\Lambda$ is no longer the
stochastic intensity of the \emph{market order} process $N$.  In
contrast, $\Lambda$ is the stochastic intensity of an auxiliary
process $L_t$ which is the arrival process of an LOB infinite server
buffer for \emph{limit orders}. The arrival events are placements of
new limit orders while departures from the service system are limit
order cancellations or market order executions, and it is (a multiple
of) the resulting buffer size $\Gamma$ which is now the stochastic
intensity of $N$.  The interpretation of $X$ is that $\Lambda$ is the
intensity process for limit orders entering the LOB, $\Gamma$ is the
current size of the LOB, and $N$ counts the accumulated number of
executed market orders.  In this model, the number of entries in the
LOB arises as the net balance of limit orders either arriving,
triggered by market order executions, or departing, as the result of
cancellation or execution, and therefore we refer to $X$ as a
\emph{buffer-Hawkes process}.
 We derive explicit formulas for the
first and second order properties of buffer-Hawkes process.
 We demonstrate its branching process representation, which reveals the clustering caused by mutual excitation, and in turn, the dependence of the increments.
The diffusion limit is found as a result of long time scaling of the process.

We consider the  market impact through a mid-price  model
\[
S_t=S_0+\left(N^{+}_t-N^{-}_t\right)\, \frac{\alpha}{2},
\]
where $N^+$ and $N^-$ are market price counts, $+$ stands for up, that
is buy, and $-$ stands for down, that is sell, $\alpha>0$ is a tick
price parameter and $S$ is the price of the asset, as in
\cite{Zaatour} where this set-up is called the toy-model.  We obtain
the scaling limit of $S$ on the basis of those results for each market
order assuming $N^+$ and $N^-$ are independent. Martingale machinery
is crucial for obtaining tightness in the functional central limit
theorem even in the independent case. The asymptotic volatility is
obtained explicitly as a function of the model parameters.  A simple
model is useful for example in analyzing the optimal high-frequency
trading strategy. A similar market-impact model is proposed for
obtaining a dynamic optimal execution framework in
\cite{alfonsiblanc2016} where the price expression includes a multiple
of the market orders that obey a Hawkes process. The market orders
affect the drift of the mid-price in \cite{carteaetal2014}, which aims
at a control strategy on the basis of self-exciting buy and sell
orders.

Related work on Hawkes processes and limit order books shed light on
future work with the model of the present paper.  Multidimensional
processes to include several assets have been considered and long-time
behavior is established in e.g. \cite{AberJedi2015}.  The queues at
different tick levels on the ask and bid sides of a limit order book
are considered in \cite{contlarrard2013}. For theoretical study of
more general Hawkes processes, a functional central limit theorem is
proven in the context of queues in \cite{GaoZhu}. In
\cite{zhengetal2014}, bid and ask prices are studied using Hawkes
processes with additional constraints. In addition to endogenous
effects modeled by Hawkes mechanism, exogenous effects are also
appended to the intensity process in \cite{contag} to model the
contagion impact from various factors of the underlying system. For a
vast list of further references, we refer to
\cite{survey,bacryetal2013}.  Most recently, in \cite{jiaomascotti2017} an
integral process is shown to arise as the limit of Hawkes processes
and to exhibit, by their nature, the self-exciting property.

Organization of the paper is as follows. In Section 2, we motivate a
preliminary version and then construct the buffer-Hawkes process for
limit order books. We also compute the first and second order
moments. In Section 3, the branching representation of the process is
given. The diffusion scaling as a long time behavior is studied in
Section 4, where both the market order process and the price process
are considered.

\section{Model of Market and Limit Orders}
\label{sec:model}

We develop a model for the limit order book step by step considering
the events that excite one another. First, the market arrival process
$N$ is constructed as a self-exciting process by a process
$\Gamma$. Then, the latter is linked to the limit orders so that
mutual excitation between market and limit orders is captured.

Our approach follows the construction in \cite[Thm.VI.6.11]{cinlar2011}.  We
begin with a probability space $(\Omega,\Hc,\P)$ and the measurable
space $(\R_+\times\R_+,\Bc)$ equipped with a filtration $\Fc=(\Fc_t)$
adapted to the Borel product $\sigma$-algebra $\Bc=(\Bc_{\R_+}\otimes
\Bc_{\R_+})$.  Let $M: \Omega\times \Bc\mapsto [0,+\infty)$ be a
Poisson random measure on $(\R_+\times\R_+,\Bc)$ with Lebesgue
intensity measure $\mu(ds,dz)=ds\,dz$ relative to $\Fc$.  This means
that $\omega\mapsto M(\omega,B)=M(B)$ is a random variable for each
$B\in \Bc$ and $B\mapsto M(\omega,B)$ is a measure on
$(\R_+\times\R_+,\Bc)$ for each $\omega$.  Moreover, for each
$B\in\Bc$ the random variable $M(B)$ is Poisson distributed with
expectation $\mu(B)=\int_B \,\mu(ds,dz)$ and for $B_1,\dots,B_n\in
\Bc$, $n\ge 2$, the variables $M(B_1),\dots,M(B_n)$ are
independent. Furthermore, $M(B)\in \Fc_t$ for every $B\in \Bc_{[
  0,t]}\otimes \Bc_{\R_+}$, and the measure $M$ restricted to the
set $(t,\infty)\times \R_+$ is independent of $\Fc_t$.

Let $\Gamma=(\Gamma_t)_{t\ge 0}$ be a nonnegative process with
c\`adl\`ag paths adapted to $\Fc$ and let $(\Gamma_{t-})_{t\ge 0}$ be
the left-continuous and hence $\Fc$-predictable version of $\Gamma$.
Suppose that $N$ is a counting process that satisfies
\[
N_t = \int_0^t \int_0^\infty\, 1_{(0,\Gamma_{s-}]}(z)\, M(ds,dz).
\]
Then, the process
\[
N_t -\int_0^t \Gamma_s \,ds = \int_0^t \int_0^\infty 1_{(0,\Gamma_{s-}]}(z)\, (M(ds,dz)-ds\, dz)
\]
is an $\Fc$-martingale. Its quadratic variation is given by
\[
\int_0^t \int_0^\infty
1_{(0,\Gamma_{s-}]}(z)\,dsdz = \int_0^t \Gamma_s \, ds
\]
as $1^2=1$. Here, $(\Gamma_{t-})$ is the stochastic
intensity process for $N$.  To add flexibility to the
model it is convenient to include a further parameter $c>0$
controlling the rate of impact of $\Gamma$ on
$N$. To do so, replace $M(ds,dz)$ by a Poisson random measure $M_c(ds,dz)$
with intensity measure $\mu_c(ds,dz)=c\,ds\,dz$, or replace
$\Gamma$ in the definition of $N$, by $c\Gamma$, so that
\begin{equation}\label{Npoissondef}
N_t = \int_0^t \int_0^\infty 1_{(0,c\Gamma_{s-}]}(z)\, M(ds,dz)\, .
\end{equation}
With this extension the compensated process $N_t-c\int_0^t
\Gamma_s\,ds$, $t\ge 0$, is an $\Fc$-martingale, and $c$ controls the
strength of the feedback mechanism by which jumps in $N$ influence the
intensity of additional jumps later on.  Next, introduce $\Lambda_t$
to be the shot noise process generated by $N$ for a given shot profile
function $\phi$, as defined in (\ref{hawkes}).

The Hawkes process arises by the choice $\Gamma=\Lambda$, hence
postulating that the intensity process of $N$ is the predictable
version $(\Lambda_{t-})_{t\ge 0}$ of the self-referential process
$\Lambda$. The setting in (\ref{hawkes}) yields a general class of
non-Markovian Hawkes processes whereas the case $\Lambda_0=\lambda_0$
and $\phi(t)=ae^{-bt}$ is the classical Hawkes process
(cf. \cite[pg.311]{cinlar2011}, \cite{survey}).  In the latter case
$\Lambda_t$ is the base level-reverting shot-noise process with
exponential pulse function defined in (\ref{LambdaNrelation}), such
that $d\Lambda_t=-b(\Lambda_t-\lambda_0)\,dt+a\,dN_t$ and
$\Lambda_t\ge \lambda_0$.  In terms of the underlying Poisson random
measure $M_c$, these relations show further that $\Lambda_t$ is a
solution of the stochastic integral equation
\[
\Lambda_t=\lambda_0+a\int_0^t\int_0^{\Lambda_{s-}} e^{-b(t-s)}\, M_c(ds,du),
\]
as discussed in \cite{jiaomascotti2017}.  For this classical case with
fixed $a$ and $b$, and taking into account the parameter $c$, it is
well-known that $\Lambda_t$ has a stationary distribution for
$ac<b$. Under this assumption, by taking expectations in the previous
displayed relation, or relation (\ref{LambdaNrelation}),
\[
E(\Lambda_t)=\lambda_0+\int_0^t ae^{-b(t-s)}\,c\,
E(\Lambda_s)\,ds,
\]
and hence $E\Lambda_\infty=\lambda_0 b/(b-ac)$.

The infinitesimal generator $\Lc
f(\lambda,n)=\frac{d}{dt}\E[f(\Lambda_t,N_t)|\Lambda_0=\lambda,N_0=n]_{t=0}$,
of the continuous time Markov process $(\Lambda_t,N_t)$ satisfies for suitable
functions $f$
\begin{eqnarray}\nonumber
\Lc f(\lambda,n)=b(\lambda_0-\lambda)\frac{\partial f}{\partial \lambda}(\lambda,n)
+c\gamma(f(\lambda+a,n+1)-f(\lambda,n)).
\label{}
\end{eqnarray}
The cluster representation is used systematically in the present
work. We recall the basic case here in a setting that will be extended
to $X_t$.  The parameters $\lambda_0>0$ and $a,b$, $0<a<b$ are
fixed. Let $\Nc(ds)$ and $\Mc(ds)$ be independent Poisson random
measures on the positive real line with intensity measures
$n(ds)=\lambda_0\,ds$ for $\Nc$ and $\xi(ds)=ae^{-b s}\,ds$ for
$\Mc$. Let $Z_t$ be the subcritical branching process with Poisson
offspring intensity $m(ds)$ and mean offspring $\nu_0=\int_{\R_+}
\xi(ds)=a/b<1$, which satisfies the branching relation
\[
Z_t=1+\int_{\R_+} 1_{\{s\le t\}} Z_{t-s}^{(s)}\,\Mc(ds)
\]
where $\{Z^{(s)}\}$ are independent copies of $Z_t$.  The Hawkes
process is the aggregate of independent branching processes which is
generated by a sequence of immigrants that arrive according to $\Nc$,
that is
\begin{equation}\label{branchingclassical}
N_t=\int_{\R_+}1_{\{s\le t\}}Z_{t-s}^{(s)}\,\Nc(ds).
\end{equation}

\subsection{Buffer-regulated basic counting process}
\label{sec:2.1}

In this section, we discuss buffer mechanisms driving the intensity
process of $N$.  For the simplest instance, let $\Lambda_0=\lambda_0$
be constant and take $a=0$ so that the effect of the Hawkes mechanism
is turned off and the intensity process is trivial,
$\Lambda_t=\lambda_0$. Still, $\lambda_0$ determines the Poisson rate
of arriving limit orders. Let $L_t$ be the corresponding Poisson
process with intensity $\lambda_0$ and let $N_t$ be defined by
(\ref{Npoissondef}) with
\[
\Gamma_t = (L_t- N_t)^+,
\]
which is the buffer size of an M/M/$\infty$ queue with
parameters $\lambda$ and $c$ \cite[Exer.VI.6.53]{cinlar2011}. Moreover, since
$\Gamma_t=0$ when $L_{t-}\leq N_{t-}$, we have $N_t \leq L_t$ for all
$t$. This fact and $M$ and $L$ being independent from each other help
to show that $(\Gamma_t,N_t)$ is Markov. By the same arguments
$\Gamma_t$ is also Markov.
%The filtration is that generated by $L$ and $M$.
%Note that this is equivalent to defining
%\[
%N_t = \int_0^t \int_0^\infty M(ds,dz)\, 1_{(0,c \Gamma_{s-}]}(z)
%\]
%with
%\[
%\Gamma_t = (L_t- N_t)^+ \; .
%\]
The resulting counting process $N$ is self-regulating in the
sense that if $N$ happens to have many jumps in a given time interval then
its intensity process is reduced accordingly, causing the further accumulation
of jumps to slow down.

To make this example more relevant for the LOB context we add
cancellations. Suppose each limit order is cancelled at a constant
rate $d\ge 0$. Put
\[
K_t= \mbox{number of non-cancelled limit orders arrived by $t$},
\]
so that $K$ is an M/M/$\infty$ buffer process of limit
orders with arrival rate $\lambda_0$ and cancellation rate $d$.
Again let $N$ be the counting process with intensity
$(c\,\Gamma_{t-})$, but now let $\Gamma$ be the dynamic storage
process
\begin{equation}\label{def:buffer}
\Gamma_t = \sup_{r\le t}(K_t-K_r- (N_t-N_r))
         = K_t-N_t+\sup_{r\le t}\{-(K_r-N_r)\},\quad t\ge 0.
\end{equation}
The continuous time Markov process $(\Gamma_t,N_t)$
has infinitesimal generator $\Lc f(\gamma,n)$, given by
\begin{align}\nonumber
\Lc f(\gamma,n)&=\lambda_0(f(\gamma+1,n)-f(\gamma,n)) + d\gamma ( f(\gamma-1,n)-f(\gamma,n))  \\
& \quad    +c\gamma(f(\gamma-1,n+1)-f(\gamma,n)).
\label{infgen_a=0}
\end{align}
The counting process $N$ causes $\Gamma$ to decrease with rate $c$ as
well, so marginally $\Gamma$ is distributed as the $M/M/\infty$ buffer
with parameters $\lambda_0$ and $c+d$ as evident from the generator
$\Lc$.

\subsection{Buffer-Hawkes process}
\label{sec:2.2}

We are now prepared to construct what we call a buffer-Hawkes process
by combining the self-exciting Hawkes mechanism with the
self-regulating buffer mechanism.  Let $S=\R_+\times \N\times \N$. The
buffer-Hawkes process is a Markov process
$X_t=(\Lambda_t,\Gamma_t,N_t)$ with c\`adl\`ag realizations on the
state space $S$, with $\Lambda_t$ standing for the intensity of limit
orders entering the limit order book, $\Gamma_t$ standing for the size
of the limit order book, and $N_t$ standing for the number of executed
market orders.  The set of parameters in the model are $\lambda_0>0$,
$a\ge 0$, $b>0$, $c>0$, $d\ge 0$ for which we assume the stability
condition
\begin{equation}\label{stabilitycondition}
ac<b(c+d),
\end{equation}
which is imposed from now on.

We assume that $\Gamma$ and $N$ are
related by (\ref{Npoissondef}), that is,
\[
N_t = \int_0^t \int_0^\infty 1_{(0,c\Gamma_{s-}]}(z)\, M(ds,dz)
\]
so that $N_t$ is a pure jump process
with compensator $c\int_0^t \Gamma_s\,ds$, and that $\Lambda_t$ is
obtained from the jumps of $N_t$ as defined in
(\ref{LambdaNrelation}), that is,
\[
\Lambda_t=\lambda_0+\int_0^t ae^{-b(t-s)}\,dN_s\; .
\]
  Let $\Xi(ds,du)$ be a Cox random measure
on $\R_+\times \R_+$ with stochastic intensity measure
$\Lambda_{t-}\,dt\,d\, e^{-d\,u}\, du$, that is,  a conditionally Poisson random measure given $\Lambda$ \cite[pg.262]{cinlar2011}. Then
\[
L_t=\int_{\R_+^2} 1_{\{s\le t\}} \,\Xi(ds,du)
\]
is the pure jump process with compensator $\int_0^t\Lambda_s\,ds$ and
the nonnegative integer-valued process
\[
K_t=\int_{\R_+^2} 1_{\{s\le t\le s+u\}} \,\Xi(ds,du)
\]
is the corresponding $L_t/M/\infty$ buffer process with arrival
process $L_t$ and exponential cancellation times of rate $d$. The
special choice $a=0$ is the previously studied case when $L_t$ is a
Poisson process with intensity $\lambda_0$ and $K_t$ is the
M/M/$\infty$ process with parameters $\lambda_0$ and $d$.
Finally, to complete the construction of $X$, let $\Gamma_t$ be the buffer
process which results from the net input $K_t-N_t$, as defined in
(\ref{def:buffer}), that is,
\[
\Gamma_t = \sup_{r\le t}(K_t-K_r- (N_t-N_r))
%         = K_t-N_t+\sup_{r\le t}\{-(K_r-N_r)\}
,\quad t\ge 0.
\]
Note that $\Lambda_t$ and $\Gamma_t$ correspond to the arrival
intensity of the limit and market orders, respectively, in the limit
order book.

The construction of $X_t$ implies that $\Gamma_t$ is a birth-death
process with births given by $L_t$ and deaths shared with the downward
jumps of $K_t$ and the upward jumps of $N_t$, as long as the buffer is
non-empty.  The simultaneous jumps structure is the key to writing
down the Markov generator for $X$. For $x=(\lambda,\gamma,n)\in S$ and
functions $f$ on $S$ we denote $\E_x f(X_t)=\E(f(X_t)|X_0=x)$.  The
generator $\Lc$ of $X$ defined by $\Lc f(x)=\frac{d}{dt}E_x
f(X_t)|_{t=0}$, for sufficiently regular $f$, has the form
\begin{align}\nonumber
\Lc f(x)&=b(\lambda_0-\lambda)\frac{\partial f}{\partial
  \lambda}(x)+\lambda(f(\lambda,\gamma+1,n)-f(x))\\
&\quad + d \gamma ( f(\lambda,\gamma-1,n)-f(x))
+c\gamma(f(\lambda+a,\gamma-1,n+1)-f(x)).
\label{generator}
\end{align}
Consequently, for such $f$,
\[
\widetilde X_t[f]=f(X_t)-f(x)-\int_0^t \Lc f(X_s)\,ds, \quad t\ge 0,
\]
is a zero-mean $\Fc$-martingale.  In particular, using $f_1(x)=\lambda$,
$f_2(x)=\gamma$, $f_3(x)=n$, this yields the semimartingale
representations
\begin{align*}
\Lambda_t&=\lambda_0+\int_0^t (b(\lambda_0-\Lambda_s)+ac\Gamma_s)\,ds
+\widetilde X_t[f_1]\\
\Gamma_t&=\int_0^t (\Lambda_s-(c+d)\Gamma_s)\,ds+\widetilde X_t[f_2] \quad
%(= \int_0^t \Lambda_se^{-(c+d)(t-s)} \,ds+\widetilde Y_t)
\\
N_t&=\int_0^t c\Gamma_s\,ds+\widetilde X_t[f_3].
\end{align*}
%where $\widetilde X_t $ and $\widetilde Y_t $ are martingales given by
%\begin{eqnarray*}
%\widetilde X_t &=&  \int_0^t \int_0^\infty 1_{(0,c\Gamma_{s-}]}(z)\, (M(ds,dz)-ds\, dz) \\
% \widetilde Y_t &=&   (\; \int_{\R_+^2} 1_{\{s\le t\le s+u\}} \,
%(\tilde{\Xi}(ds,du)- \Lambda_{s-}\,ds\,(c+d)\, e^{-(c+d)\,u}\, du  ) \;  )
%\end{eqnarray*}
%(as $\Gamma$ behaves like the buffer process of an M/M/$\infty$ queue
%with exponential service times with rate $c+d$
%\cite[pg.312]{cinlar2011}.)
To have these relations consistent with the existence of finite
expected values in equilibrium as $t\to\infty$, one needs
$b\lambda_0-b\E(\Lambda_\infty)+ac\E(\Gamma_\infty)=0$ and
$\E(\Lambda_\infty)=(c+d)\E(\Gamma_\infty)$. Hence
$\E(\Gamma_\infty)=\lambda_0 b/(b(c+d)-ac)<\infty$, due to the
stability condition (\ref{stabilitycondition}).

To close this subsection, we comment on the two-dimensional marginals
of $X$.
The marginal distribution $(\Lambda_t,N_t)$ is not Markovian, neither
is $(\Gamma_t,N_t)$ except for the case $a=0$ in (\ref{infgen_a=0}).
The marginal distribution $(\Lambda_t,\Gamma_t)$ is a Markov process
with generator
\begin{align*}
\Lc f(\lambda,\gamma)&=b(\lambda_0-\lambda)\frac{\partial f}{\partial
  \lambda}(x)+\lambda(f(\lambda,\gamma+1)-f(\lambda,\gamma))\\
&\quad + d \gamma ( f(\lambda,\gamma-1)-f(\lambda,\gamma))
+c\gamma(f(\lambda+a,\gamma-1)-f(\lambda,\gamma)).
\end{align*}
This bivariate process is a self-exciting modification of the
M/M/$\infty$ model where each departure from the service system
independently with probability $c/(c+d)$ triggers a novel shot-noise
contribution to the intensity of subsequent customer arrivals.

\subsection{First and second order moments}

We derive expressions for the first and second order moments of the
buffer-Hawkes process in terms of the parameters $a,b,c,d$ and
$\lambda_0$. Put
\[
Q=\sqrt{(b-c-d)^2+4ac},\quad q_-=(b+c+d-Q)/2,\quad q_+=(b+c+d+Q)/2.
\]

\begin{proposition} Denote $\E(X_t)=(\ell_t,g_t,m_t)$. We have
\begin{align*}
\ell_t
&=\frac{\lambda_0}{q_+-q_-}
   \Big(q_+-q_-+ac \,\int_0^t (e^{-q_-s}-e^{-q_+s})\,ds\Big)\\
g_t&=\frac{\lambda_0}{q_+-q_-}\Big(e^{-q_-t}-e^{-q_+t}+b\int_0^t
(e^{-q_-s}-e^{-q_+s})\,ds\Big)\\
m_t&=c\int_0^t g_s\,ds.
 \end{align*}
\end{proposition}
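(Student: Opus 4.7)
The plan is to take expectations in the three semimartingale equations displayed just before the proposition. Since each $\widetilde X_t[f_i]$ is a zero-mean $\Fc$-martingale, Fubini applied to the drift integrals produces the closed linear system
\begin{align*}
\ell_t&=\lambda_0+\int_0^t (b\lambda_0-b\ell_s+ac\,g_s)\,ds,\\
g_t&=\int_0^t (\ell_s-(c+d)g_s)\,ds,\\
m_t&=c\int_0^t g_s\,ds,
\end{align*}
with $\ell_0=\lambda_0$ and $g_0=m_0=0$. The one genuinely non-routine point, which I would flag as the main obstacle, is to justify that $\E\Lambda_t$ and $\E\Gamma_t$ are finite for every fixed $t$ so that the martingale terms actually have vanishing mean after taking expectations. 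I would obtain this by localizing with hitting times of large values of $\Lambda$ and $\Gamma$, taking expectations, and then using Gronwall on the resulting integral inequality to produce a locally linear a priori bound in $t$; alternatively, it can be read off the branching construction of Section~3, where $\Lambda_t$ and $\Gamma_t$ are dominated by integrable quantities under the stability condition~(\ref{stabilitycondition}).

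Once integrability is in hand, differentiating the first two equations reduces them to a two-dimensional linear ODE
\[
\frac{d}{dt}\begin{pmatrix}\ell_t\\ g_t\end{pmatrix}
=A\begin{pmatrix}\ell_t\\ g_t\end{pmatrix}+\begin{pmatrix}b\lambda_0\\ 0\end{pmatrix},\qquad
A=\begin{pmatrix}-b & ac\\ 1 & -(c+d)\end{pmatrix}.
\]
The characteristic polynomial of $A$ equals $x^2+(b+c+d)x+b(c+d)-ac$, and since $(b+c+d)^2-4(b(c+d)-ac)=(b-c-d)^2+4ac=Q^2$, its roots are exactly $-q_-$ and $-q_+$, both strictly negative under~(\ref{stabilitycondition}). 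The unique equilibrium is $(\ell_\infty,g_\infty)=-A^{-1}(b\lambda_0,0)^\top=\frac{b\lambda_0}{b(c+d)-ac}(c+d,1)$, and the general solution is this equilibrium plus a linear combination of $e^{-q_-t}$ and $e^{-q_+t}$ along the two eigenvectors of $A$.

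Imposing $\ell_0=\lambda_0$ and $g_0=0$ pins down the two coefficients uniquely. Using the Vieta identities $q_++q_-=b+c+d$ and $q_+q_-=b(c+d)-ac$, together with the elementary evaluation $\int_0^t(e^{-q_-s}-e^{-q_+s})\,ds=(q_+-q_-)/(q_-q_+)-e^{-q_-t}/q_-+e^{-q_+t}/q_+$, one can convert the eigenvector expansion into the forms claimed, where all constants are absorbed into the integral $\int_0^t(e^{-q_-s}-e^{-q_+s})\,ds$; a quick sanity check is that $\ell_\infty$ and $g_\infty$ are recovered by sending $t\to\infty$ inside the formulas. The expression $m_t=c\int_0^t g_s\,ds$ then requires no further calculation. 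Beyond the integrability step, the remainder is routine linear-ODE bookkeeping.
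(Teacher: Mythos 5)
Your proposal is correct and follows essentially the same route as the paper: the paper applies Dynkin's formula with $f_1,f_2,f_3$ (equivalent to taking expectations in the semimartingale decomposition) to obtain the same linear ODE system and then states that its solution gives the result, with $-q_\pm$ as the eigenvalues. The only difference is that you explicitly flag and handle the integrability of $\E\Lambda_t,\E\Gamma_t$ needed to kill the martingale terms, a point the paper passes over silently; your localization/Gronwall remedy is sound and the remaining ODE bookkeeping matches the stated formulas.
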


\begin{proof}
Using Dynkin's formula for the generator introduced in
(\ref{generator}),
\begin{equation}\label{dynkinformula}
\E_x f(X_t)=f(x)+\E_x\int_0^t \Lc f(X_s)\,ds,\quad X_0=x,
\end{equation}
it is straightforward to derive coupled systems of ODE's for the first
and second order moments, cf. \cite[Chp.2]{Zaatour}.
 Then, using (\ref{dynkinformula}),
%\begin{align*}
%\ell_t'&=\lambda_0b-b\ell_t+acg_t\quad \ell_0=\lambda_0\\
%g_t'&=-(c+d) g_t+\ell_t\quad g_0=0\\
%m_t'&=cg_t\quad m_0=0
%\end{align*}
\begin{align*}
\left[\begin{array}{c}
           \ell_t'\\
           g_t'\\
           m_t'
\end{array}\right]
+ \left[\begin{array}{ccc}
         b & -ac & 0  \\
         -1 & c+d & 0  \\
          0 & -c & 0
\end{array}\right]
\left[\begin{array}{c}
           \ell_t\\
           g_t\\
           m_t
\end{array}\right]
=\left[\begin{array}{c}
           b\lambda_0\\
           0\\
           0
\end{array}\right]
\end{align*}
In view of (\ref{stabilitycondition}),
\[
q_+\ge q_-> 0,\quad q_+-q_-=Q\ge 0,\quad q_-q_+=b(c+d)-ac>0.
\]
For $ac>0$ we have $Q>0$. The solutions of the linear ODE system yield the result.
\end{proof}

Asymptotically as $t\to\infty$, we get
\[
\ell_t\to \ell_\infty=\frac {b(c+d)\lambda_0}{b(c+d)-ac},\quad
g_t\to g_\infty=\frac{\ell_\infty}{c+d},\quad m_t\sim \frac{c\,\ell_\infty}{c+d}\,t
\]

Note that the case $a=0$ with $c>0$ is the basic buffer model in
Section \ref{sec:2.1}, for which $q_-=(c+d)\wedge b$, $q_+=(c+d)\vee
b$, and $Q=|b-c-d|$. Then, for any $b$,
\[
\ell_t=  \lambda_0 ,\quad
g_t=\frac{\lambda_0}{c+d}
(1-e^{-(c+d)t} )\to
\frac{\lambda_0}{c+d}, \quad
m_t=\frac{c\lambda_0}{c+d}\int_0^t (1-e^{-(c+d)s})\,ds
   \sim \frac{c\lambda_0}{c+d}\,t.
\]

%=========================================

We compute all second order moments in pursuit of asymptotic variance of $N_t$ as given next.

\begin{proposition} For large $t$, we have
\begin{align*}
V(N_t)\sim
%   &=\Big(\Big(1+\frac{ac}{2(b(c+d)-ac)}\Big)\frac{2ac}{b(c+d)-ac}
 %  +1\Big)cg_\infty \,t\\
   &\frac{\lambda_0 c}{c+d}\Big(\frac{b(c+d)}{b(c+d)-ac}\Big)^3t.
\end{align*}
\end{proposition}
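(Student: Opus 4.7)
The plan is to extend the Dynkin argument of Proposition 1.1 to the second order. Applying formula (\ref{dynkinformula}) to the six quadratic monomials $f(x)=\lambda^2,\gamma^2,n^2,\lambda\gamma,\lambda n,\gamma n$, with $\Lc f$ read off from (\ref{generator}), yields a closed linear ODE system for the corresponding second moments, whose inhomogeneous terms involve only the first moments $\ell_t,g_t,m_t$ already computed. Subtracting products of first moments converts this into the equivalent system for the variances $u_t=V(\Lambda_t),\ v_t=V(\Gamma_t),\ w_t=V(N_t)$ and covariances $p_t=\mathrm{Cov}(\Lambda_t,\Gamma_t),\ q_t=\mathrm{Cov}(\Lambda_t,N_t),\ r_t=\mathrm{Cov}(\Gamma_t,N_t)$.

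The system triangulates in three layers. First, $(u_t,v_t,p_t)$ satisfies a closed Lyapunov-type system forced by $(\ell_t,g_t)$, whose coefficient matrix is the symmetrized lift of the stable first-moment matrix of $(\Lambda_t,\Gamma_t)$; under the stability condition (\ref{stabilitycondition}) all its eigenvalues have negative real parts, so $(u_t,v_t,p_t)\to(u_\infty,v_\infty,p_\infty)$. Second, $(q_t,r_t)$ obeys a $2\times 2$ linear ODE with the same coefficient matrix as the first-moment system of $(\Lambda_t,\Gamma_t)$, driven by the already-converging inputs $(p_t,v_t,g_t)$, hence $(q_t,r_t)\to(q_\infty,r_\infty)$. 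Third, Dynkin's formula applied to $f=n^2$ gives the single scalar equation
\[
w_t'=2c\,r_t+c\,g_t,
\]
from which $V(N_t)=w_t\sim(2c\,r_\infty+c\,g_\infty)\,t$ as $t\to\infty$.

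It remains to solve the stationary equations for $(u_\infty,v_\infty,p_\infty,q_\infty,r_\infty)$ and to verify the algebraic identity
\[
2c\,r_\infty+c\,g_\infty=\frac{\lambda_0 c}{c+d}\Big(\frac{b(c+d)}{b(c+d)-ac}\Big)^{3}.
\]
Setting the dimensionless ratio $\nu=ac/(b(c+d))<1$, so that $g_\infty=\lambda_0/((c+d)(1-\nu))$, the target assumes the suggestive form $2r_\infty+g_\infty=g_\infty/(1-\nu)^2$, paralleling the classical Hawkes factor $1/(1-\nu_0)^3$. The main obstacle is purely algebraic: the $3\times 3$ stationary block for $(u_\infty,v_\infty,p_\infty)$ is not diagonal, and inverting it and substituting into the $(q_\infty,r_\infty)$ system requires a careful bookkeeping of cancellations before the cubic denominator emerges in closed form. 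No further analytic difficulty arises, since stability of all three subsystems is already supplied by (\ref{stabilitycondition}).
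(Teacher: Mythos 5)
Your proposal follows essentially the same route as the paper: Dynkin's formula applied to the quadratic monomials, passage to centered second moments, exploitation of the triangular block structure (the $(\Lambda,\Gamma)$-covariance block first, then the $N$-covariances, then the scalar equation $w_t'=2c\,r_t+c\,g_t$), and a final stationary-limit computation. The algebra you defer is exactly what the paper carries out, arriving at $2r_\infty+g_\infty=g_\infty/(1-\nu)^2$ with $\nu=ac/(b(c+d))$, so your plan is correct as stated.
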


\begin{proof}
Using (\ref{dynkinformula}), the second order moments
\[
p_t=\E(\Lambda_t\Gamma_t),\quad q_t=\E(\Lambda_t^2),\quad
r_t=\E(\Gamma_t^2), \quad u_t=\E(\Lambda_tN_t),\quad v_t=\E(\Gamma_t
N_t),\quad w_t=\E(N_t^2)
\]
satisfy the system of equations
\begin{align*}
\left[\begin{array}{c}
           p_t'\\
           q_t'\\
           r_t'\\
           u_t' \\
           v_t'
\end{array}\right]
+ &\left[\begin{array}{ccccc}
         b+c+d & -1 & -ac & 0 & 0 \\
         -2ac & 2b & 0 & 0 & 0    \\
          -2 & 0 & 2(c+d) & 0 & 0 \\
          -c & 0 & 0 & b & -ac    \\
          0 & 0 & -c & -1 & c+d
\end{array}\right]
\left[\begin{array}{c}
           p_t\\
           q_t\\
           r_t\\
           u_t \\
           v_t
\end{array}\right]\\
&=\left[\begin{array}{c}
           b\lambda_0-ac\\
           ca^2\\
           c+d\\
           ac \\
           -c
\end{array}\right]g_t
  + \left[\begin{array}{c}
           0\\
           2b\lambda_0 \\
           1\\
           0 \\
           0
\end{array}\right]\ell_t
  + \left[\begin{array}{c}
           0\\
           0\\
           0\\
           b\lambda_0 \\
           0
\end{array}\right]m_t
\end{align*}
and, furthermore, $w'_t=2cv_t+cg_t$.  Rewriting, the functions
\[
\bar p_t=C(\Lambda_t,\Gamma_t)=p_t-g_t\ell_t,\quad \bar q_t=V(\Lambda_t)=q_t-\ell_t^2,\quad
\bar r_t=V(\Gamma_t)=r_t-g_t^2,
%\quad u_t=\E(\Lambda_tN_t),\quad v_t=\E(\Gamma_t
%N_t),\quad w_t=\E(N_t^2)
\]
solve
\begin{align*}
\left[\begin{array}{c}
          \bar p_t'\\
           \bar q_t'\\
           \bar r_t'
\end{array}\right]
+ &\left[\begin{array}{ccc}
         b+c+d & -1 & -ac  \\
         -2ac & 2b & 0  \\
          -2 & 0 & 2(c+d)
\end{array}\right]
\left[\begin{array}{c}
           \bar p_t\\
           \bar q_t\\
           \bar r_t
\end{array}\right]
=\left[\begin{array}{c}
           -ac\\
           ca^2\\
           c+d
\end{array}\right]g_t
  + \left[\begin{array}{c}
           0\\
           0\\
           1
\end{array}\right]\ell_t
\end{align*}
while $\bar u_t=C(\Lambda_t,N_t)$, $\bar v_t=C(\Gamma_t,N_t)$ and
$\bar w_t=V(N_t)$ are the solutions of
\begin{align*}
\left[\begin{array}{c}
          \bar u_t'\\
           \bar v_t'
\end{array}\right]
+ &\left[\begin{array}{cc}
         b & -ac  \\
         -1 & c+d
\end{array}\right]
\left[\begin{array}{c}
           \bar u_t\\
           \bar v_t
\end{array}\right]
=\left[\begin{array}{c}
           ac\\
           -c
\end{array}\right]g_t
  + c\left[\begin{array}{c}
           \bar p_t\\
           \bar r_t
\end{array}\right]
\end{align*}
and $\bar w_t'=2c \bar v_t+cg_t$. In the limit $t\to\infty$,
\[
\bar p_\infty=\frac{ca^2(c+d)g_\infty}{2(b+c+d)(b(c+d)-ac)},\quad
\bar q_\infty =\frac{ca^2((c+d)(b+c+d)-ac)g_\infty}{2(b+c+d)(b(c+d)-ac)},
\]
and
\[
\bar r_\infty=g_\infty+\frac{ca^2 g_\infty}{2(b+c+d)(b(c+d)-ac)}.
\]
Moreover,
\begin{align*}
\left[\begin{array}{c}
          \bar u_\infty\\
           \bar v_\infty
\end{array}\right]
&=\frac{1}{b(c+d)-ac}\left[\begin{array}{cc}
         c+d & ac  \\
         1 & b
\end{array}\right]
\left[\begin{array}{c}
           acg_\infty +c\bar p_\infty\\
           c(\bar r_\infty-g_\infty)
\end{array}\right]\\
&=\left[\begin{array}{c}
           c+d+\frac{ac((c+d)^2+ac)}{2(b+c+d)(b(c+d)-ac)}\\
           1+\frac{ac}{2(b(c+d)-ac)}
\end{array}\right] \frac{acg_\infty}{b(c+d)-ac}.
\end{align*}
Finally,
\begin{align*}
V(N_t)\sim c(2\bar v_\infty+g_\infty)t
%   &=\Big(\Big(1+\frac{ac}{2(b(c+d)-ac)}\Big)\frac{2ac}{b(c+d)-ac}
 %  +1\Big)cg_\infty \,t\\
   &=\frac{\lambda_0 c}{c+d}\Big(\frac{b(c+d)}{b(c+d)-ac}\Big)^3t.
\end{align*}
\end{proof}

\section{Branching process representation}

The market orders component $N_t$ in the buffer-Hawkes process admits
a branching process representation analogous to that of the original
Hawkes process as in (\ref{branchingclassical}). The branching
representation emphasizes the clustering in time, which occurs as a
result of self-excitation, or mutual excitation as modelled above. It
also helps to give a straight proof of the diffusion limit for finite
dimensional distributions.

To derive the branching representation we use the same probabilistic
setting as detailed in the construction of the Markov process $(X_t)$
but now suppose that $\Nc(ds,du)$ and $\Mc(ds,du)$ are Poisson
measures on $\R_+\times\R_+$ with intensity measures $n(ds,du)$ and
$m(ds,du)$, respectively, given by
\begin{equation}\label{Poissonint}
n(ds,du)=\lambda_0ds\,ce^{-(c+d)u}du,\quad
m(ds,du)=a e^{-bs}ds\,ce^{-(c+d)u}du.
\end{equation}
To see why these intensities reflect the dynamics of $X$, consider a
time $t$ when the size of the LOB is some number
$\Gamma_t=\gamma$. Then the rate of execution is $c\gamma$ and the
rate of cancellation is $d\gamma$. In other words, each single limit
order has execution rate $c$ and cancellation rate $d$ and hence
remains in the book during an exponentially distributed time with
total intensity $c+d$. After independent thinning with probability
$c/(c+d)$ one obtains the intensity $n(ds,du)$ to have among the
regular arrivals of limit orders at rate $\lambda_0$ an entry at time
$s$ which is executed at time $s+u$.  Similarly, $m(ds,du)$ is the
Poisson intensity for the corresponding event to occur as the result
of a separate contribution $ae^{-bs}$, $s\ge 0$, to the arrival
intensity of limit orders.  Consequently,
\[
N_t^{(0)}= \int_{\R_+\times\R_+} 1_{\{s+u\le t\}}\,\Nc(ds,du)
        = \int_0^t\int _0^{t-s} \,\Nc(ds,du)
\]
is the number of market orders in $[0,t]$ generated by the base level
intensity $\lambda_0$. At the time of execution each of these
events independently adds to the intensity of limit orders. Due to the
Hawkes mechanism the new intensity contributions are governed by the
Poisson measure $\Mc$ and come in the shape of shot-noise profiles of
height $a$ and removal rate $b$. Each new market order executed as a
result of the added intensity repeats the procedure independently, and
therefore the total number of market orders grow as the total number
of progeny in a branching process.  Let $Z_t$, $t\ge 0$, denote the
total number of individuals at time $t$ in a continuous time branching
process with $Z_0=1$ and offspring distribution on $\R_+$ consisting
of one offspring unit at birth time $s+u$ for each Poisson point
$(s,u)$ of $\Mc$. Because of (\ref{stabilitycondition}) the branching
process is subcritical with mean offspring
\begin{equation} \label{nu}
\nu=\int_{\R_+\times \R_+} m(ds,du)=\int_0^\infty\int _0^\infty ae^{-bs}
ce^{-(c+d)u}\,dsdu =\frac{ac}{b(c+d)}<1.
\end{equation}
Let $Z^{(s,u)}$, $(s,u)\in \Mc$, denote a collection of independent
copies of $Z$. The branching property relation is
\begin{equation}\label{branching}
Z_t\stackrel{d}{=}
1+\int_0^t\int _0^{t-s} Z^{(s,u)}_{t-s-u}\,\Mc(ds,du).
\end{equation}
In conclusion, the combined number of market orders arise as
\begin{align}\label{marketorders}
 N_t&=\int_0^t\int _0^{t-s} Z^{(s,u)}_{t-s-u}\,\Nc(ds,du),
\end{align}
where the $Z$-processes are independent of $\Nc$. Thus, the component
$(N_t)$ of $X$ is also a
branching process with immigration, such that single immigrants arrive
at times $s+u$ for each Poisson point $(s,u)$ of $\Nc$ and each
immigrant generate independent offspring according to $Z$.  It is
sometimes convenient to split up the
immigrants counted by $N^{(0)}_t$ from the further jumps of $N_t$
along trajectories of $Z_t$ due to the Hawkes feedback, that is
\begin{align}\label{marketorderssplit}
 N_t %& =\int_0^t\int _0^{t-s} Z^{(s,u)}_{t-s-u}\,\Nc(ds,du)\\
&=N_t^{(0)}+\int_0^t\int _0^{t-s} (Z_{t-s-u}^{(s,u)}-1)\,\Nc(ds,du).
\end{align}

\subsection{Properties of the subcritical branching process}

Since the underlying branching mechanism is subcritical, as
$t\to\infty$ the almost surely nondecreasing process $Z_t$ reaches a
limit $Z_\infty<\infty$, $\P$-a.s., attaining the distribution of the
ultimate total progeny in a Galton Watson process with Poisson$(\nu)$
offspring distribution starting from a single individual.
It is well-known that $Z_\infty$ has a Borel distribution with mean
$(1-\nu)^{-1}<\infty$.  These facts are recalled next as we derive
bounds for the the moment generating functions $\E[e^{\theta Z_t}]$
and $\E[e^{\theta N_t}]$. By (\ref{branching}), for all $\theta\le 0$
at least,
\[
\ln \E[e^{\theta Z_t}]=\theta +\int_0^t \int_0^{t-s} \big(\E[e^{\theta
  Z_{t-s-u}}]-1\big)\,m(ds,du)
\]
Let
\[
V_t(\theta)=\ln \E[e^{\theta Z_t}],\quad
V_\infty(\theta)=\ln \E[e^{\theta Z_\infty}],
\]
for each $\theta$ where these functions exist finitely. The above integral
equation,
\[
V_t(\theta)=\theta+\int_0^t \int_0^s
\big(e^{V_{s-u}(\theta)}-1\big)\,ae^{-b(t-s)} ds\,c^{-(c+d)u}du,
\]
shows that $V_t(\theta)$ is differentiable in $t$ and solves the
nonlinear ODE
\begin{equation}\label{logmgfode}
V''_t(\theta)+(b+c+d)V'_t(\theta)+b(c+d)V_t(\theta)
  =b(c+d)\theta+ac(e^{V_t(\theta)}-1),
\quad V_0(\theta)=\theta,\; V'_0(\theta)=0.
\end{equation}
By monotone convergence,
\[
V_\infty(\theta)= \theta+\nu \big(e^{V_\infty(\theta)}-1\big).
\]
Equivalently
\[
-(V_\infty(\theta)-\theta+\nu)\,\exp\{-(V_\infty(\theta)-\theta+\nu)\}=-\nu
e^{\theta-\nu}.
\]
For each $\theta$ such that $-e^{-1}\le -\nu e^{\theta-\nu}<0$,
this equation has two solutions given by the two branches $W_0$ and
$W_{-1}$ of the real valued Lambert-$W$ function.  The secondary
branch $W_{-1}$ is excluded since the property $W_{-1}(x)\le -1$,
$e^{-1}\le x<0$, would imply $V_\infty(\theta)>\theta$ for all $\theta$, which is
not the case.  The relevant solution in terms of the
primary branch $W_0$ of the Lambert-$W$ function is
\[
V_\infty(\theta)=\theta-\nu-W_0(-\nu e^{\theta-\nu}),\quad
\theta<\theta_0=-\ln \nu +\nu-1.
\]
Since $\theta_0>0$ for $0<\nu<1$ and $W_0(e^{-1})=-1$,
the (logarithmic) moment generating functions $V_t(\theta)$ and
$V_\infty(\theta)$ exist finitely for $\theta$ in an open interval
containing $0$, namely
\begin{equation}\label{finitecumulant}
V_t(\theta)\le V_\infty(\theta)\le V_\infty(\theta_0)=-\ln \nu,\quad
\theta\le \theta_0.
\end{equation}
In particular, the moments of any order $n$ are
finite,
\begin{equation}\label{finitemoment}
\E Z_t^n\le \E Z_\infty^n<\infty,\quad t>0.
\end{equation}
The defining property $W(x)e^{W(x)}=x$ of the Lambert-W function
implies
\[
\E[e^{\theta Z_\infty}]=e^{\theta-\nu}e^{-W_0(-\nu e^{-\nu}e^\theta)}
   = \frac{W_0(-\nu e^{-\nu}e^\theta)}{-\nu},\quad \theta\le \theta_0.
\]
An application of the Taylor series of $W_0$ around $0$,
\[
W_0(x)=\sum_{n=1}^\infty \frac{(-1)^{n-1}}{n!} x^n,\quad |x|\le e^{-1},
\]
reveals the Borel distribution
\[
\P(Z_\infty=k)=\frac{(k\nu)^{k-1}}{k!}e^{-\nu k},\quad k=1,2\dots
\]
It is seen from (\ref{logmgfode}) that the mean and variance functions
\[
x_t=\E Z_t=\frac{d}{d\theta}V_t(\theta)\big|_{\theta=0},\quad
y_t=\mathrm{Var} Z_t=\frac{d^2}{d\theta^2}V_t(\theta)\big|_{\theta=0},
\]
satisfy
\[
x_t''+(b+c+d)x_t'+(b(c+d)-ac)x_t=b(c+d),\quad x_0=1,\; x'_0=0
\]
and
\[
y_t''+(b+c+d)y_t'+(b(c+d)-ac)y_t=ac x_t^2,\quad y_0=0,\; y'_0=0.
\]
The solutions are
\[
x_t
%=\frac{ac}{b(c+d)-ac}\Big(1-\frac{q_+e^{-q_- t}-q_-e^{-q_+ t}}{q_+-q_-}\Big)
 =1+ac\, \int_0^t \frac{e^{-q_- s}-e^{-q_+ s}}{q_+-q_-}\,ds
\]
and
\[
y_t= ac \int_0^t x_s^2 \,\frac{e^{-g_-(t-s)}-e^{-g_+(t-s)}}{q_+-q_-}\,ds
\]
Moreover, as $t\to\infty$,
\begin{align*}
x_t &\to x_\infty=\E Z_\infty=\frac{b(c+d)}{b(c+d)-ac}=\frac{1}{1-\nu},\\
y_t &\to y_\infty=\mathrm{Var}Z_\infty= (x_\infty-1)x_\infty^2=\frac{\nu}{(1-\nu)^3}.
\end{align*}
As for properties of the increments of $Z_t$ we mention the following
property which will be used to obtain the covariance of a stationary
increments version of the buffer-Hawkes process.

\begin{proposition} \label{prop_integralw}
The squared increment expectation
\[
w_r(t)=\E[(Z_{r+t}-Z_r)^2], \quad t\ge 0,
\]
is integrable over $r$, such that
\begin{align*}
\int_0^\infty w_r(t)\,dr &=\frac{1}{1-\nu}
 \int_0^\infty (x_{r+t}-x_r)^2\,dr\\
&\quad + \frac{\nu}{1-\nu} \int_0^t
\frac{(c+d)e^{-b(t-u)}-be^{-(c+d)(t-u)}}{c+d-b}
    (x_u^2+y_u)\,du<\infty
\end{align*}
is uniformly bounded in $t$.
\end{proposition}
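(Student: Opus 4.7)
The plan is to use the branching-process representation to express $Z_{r+t}-Z_r$ as an independent Poisson sum, obtain a Volterra-type fixed-point relation for $w_r(t)$, and then integrate in $r$ so the convolution collapses through the total offspring mass $\nu$. Applying (\ref{branching}) at times $r$ and $r+t$ and subtracting, I decompose
\[
Z_{r+t}-Z_r=A_r(t)+B_r(t),
\]
\[
A_r(t)=\int\int 1_{\{s+u\le r\}}\bigl(Z^{(s,u)}_{r+t-s-u}-Z^{(s,u)}_{r-s-u}\bigr)\,\Mc(ds,du),\quad B_r(t)=\int\int 1_{\{r<s+u\le r+t\}}Z^{(s,u)}_{r+t-s-u}\,\Mc(ds,du).
\]
The two pieces live on disjoint regions of $\Mc$ and are built from disjoint collections of the independent sub-branching processes $\{Z^{(s,u)}\}$, so they are independent.

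By the second-moment formula for a Poisson integral with independent random marks (law of total variance applied to the mark structure),
\[
\mathrm{Var}(A_r(t))=\int_0^r\!\!\int_0^{r-s}w_{r-s-u}(t)\,m(ds,du),\quad \mathrm{Var}(B_r(t))=\int\int 1_{\{r<s+u\le r+t\}}(x_{r+t-s-u}^2+y_{r+t-s-u})\,m(ds,du),
\]
while $\E[Z_{r+t}-Z_r]=x_{r+t}-x_r$. Hence $w_r(t)=(x_{r+t}-x_r)^2+\mathrm{Var}(A_r(t))+\mathrm{Var}(B_r(t))$. Integrating in $r$ and applying Fubini, the substitution $\rho=r-s-u$ turns the $A$-contribution into $\nu\int_0^\infty w_\rho(t)\,d\rho$, using (\ref{nu}) for the total mass of $m$. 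For the $B$-contribution, substitute $v=r+t-s-u$ to obtain
\[
\int_0^\infty\mathrm{Var}(B_r(t))\,dr=\int_0^t(x_v^2+y_v)\left(\int\int 1_{\{s+u>t-v\}}\,m(ds,du)\right)dv,
\]
and observe that the bracketed measure equals $\nu\,\P(S+U>t-v)$ for independent $S\sim\mathrm{Exp}(b)$ and $U\sim\mathrm{Exp}(c+d)$; a direct convolution yields $\P(S+U>\tau)=\tfrac{(c+d)e^{-b\tau}-be^{-(c+d)\tau}}{c+d-b}$, which is exactly the kernel in the statement.

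Collecting the terms gives
\[
(1-\nu)\int_0^\infty w_r(t)\,dr=\int_0^\infty(x_{r+t}-x_r)^2\,dr+\nu\int_0^t(x_u^2+y_u)\frac{(c+d)e^{-b(t-u)}-be^{-(c+d)(t-u)}}{c+d-b}\,du,
\]
from which the claimed identity follows on dividing by $1-\nu>0$. Uniform boundedness in $t$ is then routine: since $x_r-x_\infty$ decays exponentially with rate $q_-$, one has $|x_{r+t}-x_r|\le 2|x_\infty-x_r|\le C e^{-q_- r}$ uniformly in $t$, giving a $t$-independent bound on the first integral; and $x_u^2+y_u\le x_\infty^2+y_\infty$ together with $\int_0^\infty\tfrac{(c+d)e^{-b\tau}-be^{-(c+d)\tau}}{c+d-b}\,d\tau=\E(S+U)=1/b+1/(c+d)$ bounds the second. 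The main obstacle is the bookkeeping: verifying the independence of $A_r(t)$ and $B_r(t)$ as genuine Poisson integrals with independent marks, and justifying the interchange of integrations in the Volterra step (whose right-hand side a priori involves $w$ itself). The latter is handled by first proving the identity for the truncations $w_r(t)\wedge K$ and then letting $K\to\infty$, using (\ref{finitemoment}) to guarantee finiteness at every stage.
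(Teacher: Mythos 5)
Your argument is correct and follows essentially the same route as the paper: the same decomposition of $Z_{r+t}-Z_r$ over the two regions $\{s+u\le r\}$ and $\{r<s+u\le r+t\}$ of $\Mc$, the same second-moment identity for the Poisson integral leading to the Volterra relation for $w_r(t)$, and the same integration over $r$ collapsing the convolution through the total mass $\nu$. Your identification of the kernel as $\nu\,\P(S+U>\tau)$ for independent exponentials, and your explicit justification of the Fubini step and of the uniform bound via the exponential decay of $x_\infty-x_r$, are pleasant elaborations of details the paper leaves as "straightforward to check," but not a different proof.
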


\begin{proof}
Using
\begin{align*}
Z_{r+t}-Z_r&=\int 1_{\{0<s+u<r\}}(Z^{(s,u)}_{r+t-s-u}-Z^{(s,u)}_{r-s-u})\,\Mc(ds.du)\\
 &\quad + \int 1_{\{r<s+u<r+t\}}Z^{(s,u)}_{r+t-s-u}\,\Mc(ds.du),
\end{align*}
it follows
\begin{align*}
w_r(t)&=\E[Z_{r+t}-Z_r]^2+\int 1_{\{0<s+u<r\}} \E[(Z_{r+t-s-u}-Z_{r-s-u})^2]\,m(ds.du)\\
 &\quad +  \int 1_{\{r<s+u<r+t\}}\E[Z_{r+t-s-u}^2]\,m(ds.du)\\
&= (x_{r+t}-x_r)^2+\int 1_{\{0<s+u<r\}} w_{r-s-u}(t)\,m(ds.du)\\
&\quad + \int 1_{\{r<s+u<r+t\}}(x_{r+t-s-u}^2+y_{r+t-s-u})\,m(ds.du).
\end{align*}
An integration of this relation over $r$ yields
\begin{align*}
\int_0^\infty w_r(t)\,dr &= \int_0^\infty (x_{r+t}-x_r)^2\,dr
+\frac{ac}{b(c+d)}\int_0^\infty w_r(t)\,dr\\
&\quad + \frac{ac}{b}
\int_0^t \frac{(c+d)e^{-b(t-u)}-be^{-(c+d)(t-u)}}{(c+d)(c+d-b)} (x_u^2+y_u)\,du,
\end{align*}
which yields the stated integral expression.  It is straightforward to
use the explicit representations for $x_t$ and $y_t$ to check the
uniform boundedness in $t$.
\end{proof}

\subsection{Further properties of $N_t$ derived from cluster representation}

Using (\ref{marketorders}),
\begin{align}\nonumber
\ln \E[e^{\theta N_t}]&=
\int_0^t \int_0^{t-s} (e^{V_{t-s-u}(\theta)}-1)\lambda_0
ce^{-(c+d)u}\,duds\\\label{logmgfN}
&=\frac{\lambda_0 c}{c+d} \int_0^t (e^{V_u(\theta)}-1)(1-e^{-(c+d)(t-u)})\,du,
\end{align}
so by (\ref{finitecumulant})
\[
 \ln \E[e^{\theta N_t}]\le
%\frac{\lambda_0 c}{(c+d)^2}(e^{\theta_0+1-\nu}-1)((c+d)t-1+e^{-(c+d)t})
\frac{\lambda_0 c}{(c+d)^2}\frac{1-\nu}{\nu}\,((c+d)t-1+e^{-(c+d)t}),
\quad \theta\le \theta_0.
\]
The mean $m_t=\E N_t$ derived in Proposition 2.1, is further related
to $x_t=\E Z_t$ via the ODE
\begin{align*}
m_t''+(c+d)m_t'=c(g_t'+(c+d)g_t)  %\\
           %   &=\frac{\lambda_0c}{q_+-q_-}
        % \Big[(q_+-b-c-d)e^{-q_+t}-(q_--b-c-d)e^{-q_-t}
        % +b(c+d)\int_0^t (e^{-q_- s}-e^{-q_+s})\,ds\Big]\\
          %   &=\frac{\lambda_0c}{q_+-q_-}
        % \Big[q_+ e^{-q_-t}-q_-e^{-q_+ t}
         % +b(c+d)\int_0^t (e^{-q_- s}-e^{-q_+s})\,ds\Big]\\
           =\lambda_0 c\, x_t.
\end{align*}
The solution of this equation with $m_0=m'_0=0$ is
\begin{align*}
\E N_t&=\int_0^t\int _0^{t-s} \E Z_{t-s-u}\,n(ds,du)\\
      &=\frac{\lambda_0c}{c+d}\int_0^t x_u(1-e^{-(c+d)(t-u)})\,du,
\end{align*}
which is also immediate from (\ref {logmgfN}) by differentiation with
respect to $\theta$.  The same method allows us to derive an expression
for the variance of $N_t$, which is more convenient than the previous
$\bar w_t$, namely
\begin{align*}
\mathrm{Var} N_t
%&=\E\int \mathrm{Var}(1+Z_{t-s-u})\,\Nc(ds,du)
% +\mathrm{Var}\int \E(1+Z_{t-s-u})\,\Nc(ds,du)\\
                &= \int \E[Z_{t-s-u}^2]\,n(ds,du)\\
     &=\frac{\lambda_0c}{c+d}\int_0^t (y_u+x_u^2)\,(1-e^{-(c+d)(t-u)})\,du.
\end{align*}
Using a scaling parameter $m$, the asymptotic rate of growth of market
orders as $m\to\infty$ is
\[
\frac{1}{m} \E N_{mt}\to \frac{\lambda_0 c}{c+d} x_\infty t
= \frac{\lambda_0 c}{c+d}\frac{1}{1-\nu}\, t,
\]
with
\[
\frac{1}{m} \mathrm{Var} N_{mt}\to \frac{\lambda_0
  c}{c+d}(x_\infty^2+y_\infty) t
=\frac{\lambda_0 c}{c+d} \frac{1}{(1-\nu)^3} t,
\]
in agreement with Proposition 2.2.  Putting these relations together
we obtain the weak law of large numbers. On the other hand,  the strong law  given by
\[
\frac{N_{mt}}{m}\longrightarrow \frac{\lambda_0 c}{c+d}\frac{1}{1-\nu}\,
t,\quad \quad    m\to \infty,
\]
 follows from the ergodic theorem in view of the existence of a stationary increments version of $N$ as given in subsection \ref{stationary} below. Note that stationarity is based on the stability assumption $\nu<1$, and ergodicity is implied by that of the time shifts of a Poisson random measure.

\section{Diffusion Limit}

In this section, we consider the long-time scaling of the market
orders to obtain a diffusion limit. This component of buffer-Hawkes
process is in our focus because the price formation will be based on
the market orders in the sequel.

\subsection{Diffusion scaling of the market order process}

Let us introduce the centered and scaled market orders $N_t^{(m)}$ by
putting
\[
\bar N_t=N_t-\E N_t,\quad N^{(m)}_t=\frac{\bar N_{mt}}{\sqrt{m}},\quad
t\ge 0.
\]
We need the following tightness result to prove functional convergence of the scaled process.

\begin{lemma} \label{onlylem} The sequence of processes $\{N_\cdot^{(m)}\}_{m\ge 1}$ is tight.
\end{lemma}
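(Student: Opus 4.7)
The plan is to apply Aldous' tightness criterion to the semimartingale decomposition of $N^{(m)}$. Writing
\[
N^{(m)}_t = M^{(m)}_t + D^{(m)}_t,\qquad M^{(m)}_t := \frac{\widetilde X_{mt}[f_3]}{\sqrt m},\quad D^{(m)}_t := \frac{c}{\sqrt m}\int_0^{mt}(\Gamma_s-g_s)\,ds,
\]
it suffices to verify Aldous' condition for each summand. Marginal tightness at each $t$ is immediate, since $\E(N^{(m)}_t)^2=V(N_{mt})/m$ is uniformly bounded in $m$ by Proposition 2.2.

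For the martingale part, the predictable quadratic variation is $\langle M^{(m)}\rangle_t=(c/m)\int_0^{mt}\Gamma_s\,ds$. By optional stopping at an $\Fc$-stopping time $\tau\le T$ and a Fubini--Cauchy--Schwarz estimate of the form $\E[\Gamma_s\mathbf 1_{A_s}]\le(\E\Gamma_s^2)^{1/2}\P(A_s)^{1/2}$ with $A_s=\{m\tau\le s<m(\tau+\delta)\}$ and $\int_0^{m(T+\delta)}\P(A_s)\,ds=m\delta$, one obtains
\[
\E(M^{(m)}_{\tau+\delta}-M^{(m)}_\tau)^2 \le c\sqrt{\bar r\,(T+\delta)\,\delta},\qquad \bar r:=\sup_s\E\Gamma_s^2<\infty,
\]
the latter finite by Proposition 2.2. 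The jumps of $M^{(m)}$ are of size $1/\sqrt m$, so Chebyshev yields Aldous' condition.

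The drift $D^{(m)}$ is the delicate piece, and the key trick is an algebraic identity. Subtracting their means in the semimartingale equations for $\Lambda$ and $\Gamma$ and eliminating $\int_0^t(\Lambda_s-\ell_s)\,ds$ by multiplying the $\Lambda$-equation by $1/b$ and equating with the $\Gamma$-equation, one derives
\[
\bigl(b(c+d)-ac\bigr)\!\int_0^t\!(\Gamma_s-g_s)\,ds \;=\; \widetilde X_t[f_1]+b\widetilde X_t[f_2]-(\Lambda_t-\ell_t)-b(\Gamma_t-g_t).
\]
This is essentially the Poisson equation $\Lc f=-(\gamma-g_\infty)$ solved by the affine function $f(\lambda,\gamma)=(\lambda+b\gamma)/(b(c+d)-ac)$. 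Taking increments of both sides over $[m\tau,m(\tau+\delta)]$, dividing by $\sqrt m$, and applying Cauchy--Schwarz on the six summands, I would bound $\E(D^{(m)}_{\tau+\delta}-D^{(m)}_\tau)^2$ by a constant multiple of: (i) the two scaled-martingale squared increments, each $O(\sqrt\delta)$ by the same Fubini--Cauchy--Schwarz method used for $M^{(m)}$, applied to the explicit predictable quadratic variations $\langle\widetilde X[f_1]\rangle_t=a^2c\int_0^t\Gamma_s\,ds$ and $\langle\widetilde X[f_2]\rangle_t=\int_0^t(\Lambda_s+(c+d)\Gamma_s)\,ds$ (computed from $\Lc f^2-2f\Lc f$); and (ii) the four scaled boundary differences $(Y_{m(\tau+\delta)}-Y_{m\tau})/\sqrt m$ for $Y\in\{\Lambda,\Gamma,\ell,g\}$, each of $L^2$-size $O(1/\sqrt m)$ by the uniformly bounded first and second moments from Proposition 2.2. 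Summation gives $\E(D^{(m)}_{\tau+\delta}-D^{(m)}_\tau)^2\le K(\sqrt\delta+1/m)$, and Aldous' condition follows.

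The main obstacle is precisely the handling of $D^{(m)}$: a naive Cauchy--Schwarz bound on $\int_{m\tau}^{m(\tau+\delta)}(\Gamma_s-g_s)\,ds$ produces a factor of $m\delta$ that is not absorbed by the $1/\sqrt m$ scaling. The algebraic identity is the crucial device that converts the long-time integral of a centered quantity into martingale increments plus bounded boundary terms, both of which are accessible via the second-moment computations of Section 2; this is the ``martingale machinery'' flagged in the introduction. One subtlety is that at a stopping time $\tau$ the bounds on $\E\Lambda_{m\tau}^2$ and $\E\Gamma_{m\tau}^2$ are not immediate from the pointwise-in-$s$ bounds on $\E\Lambda_s^2,\E\Gamma_s^2$; this is handled by a maximal-type argument (Doob applied to the semimartingale functional $\lambda+b\gamma$ together with its $\Lc$-drift), which yields the same $O(1/\sqrt m)$ control uniformly in $\tau$.
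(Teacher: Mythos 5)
Your overall architecture is the same as the paper's: the semimartingale decomposition $\bar N=A+M$ and the Aldous--Rebolledo criterion, with the martingale part controlled through its bracket $c\int_0^t\Gamma_s\,ds$ and the uniform second-moment bounds of Proposition 2.2; your Fubini--Cauchy--Schwarz estimate for $\E\int_{m\tau}^{m(\tau+\delta)}\Gamma_s\,ds$ is correct and does that job. The substantive divergence is the drift. The paper bounds $\E[(\int_{m\tau}^{m(\tau+h)}(\Gamma_s-g_s)\,ds)^2]$ directly by H\"older against $\bar r_{\mathrm{sup}}=\sup_t\mathrm{Var}(\Gamma_t)$, whereas you pass through the identity $(b(c+d)-ac)\int_0^t(\Gamma_s-g_s)\,ds=\widetilde X_t[f_1]+b\widetilde X_t[f_2]-(\Lambda_t-\ell_t)-b(\Gamma_t-g_t)$. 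That identity is correct (eliminate $\int_0^t(\Lambda_s-\ell_s)\,ds$ from the two compensator equations; equivalently $\Lc(\lambda+b\gamma)=b\lambda_0-(b(c+d)-ac)\gamma$), and so are your brackets $a^2c\int_0^t\Gamma_s\,ds$ and $\int_0^t(\Lambda_s+(c+d)\Gamma_s)\,ds$. Your reason for the detour is also well taken: after the substitution $s\mapsto ms$ the drift increment equals $cm\int_\tau^{\tau+h}(\Gamma_{ms}-g_{ms})\,ds$, so Chebyshev at level $\epsilon\sqrt m$ leaves a factor $m$ (not $1$) in front of the second moment, and the resulting bound $c^2 m\,\epsilon^{-2}T'\bar r_{\mathrm{sup}}\delta$ is not uniform in $m$; the martingale rewriting is precisely what removes that factor, and it is the route taken in the Hawkes FCLT literature.

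The gap in your version is the boundary terms. The deterministic pieces $(\ell_{m(\tau+\delta)}-\ell_{m\tau})/\sqrt m$ and $(g_{m(\tau+\delta)}-g_{m\tau})/\sqrt m$ are indeed $O(1/\sqrt m)$, but for $\Lambda_{m\tau}$ and $\Gamma_{m\tau}$ the claimed ``$L^2$-size $O(1/\sqrt m)$ by the uniformly bounded second moments'' is exactly the point you flag and then do not establish: $\sup_t\E[\Gamma_t^2]<\infty$ controls $\Gamma$ at deterministic times, not at a random time $m\tau$. Your proposed repair --- Doob applied to $\lambda+b\gamma$ --- does not close it: the bracket of $\widetilde X[f_1]+b\widetilde X[f_2]$ grows linearly, so Doob gives $\E[\sup_{t\le mT'}(\widetilde X_t[f_1]+b\widetilde X_t[f_2])^2]=O(m)$, hence only $O(1)$ rather than $o(1)$ after dividing by $m$; and the drift of $\lambda+b\gamma$ is $-(b(c+d)-ac)\int_0^t(\Gamma_s-g_\infty)\,ds$, i.e.\ the very quantity being estimated, so that route is circular. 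What is needed is a genuine maximal estimate such as $\P(\sup_{t\le mT'}(\Lambda_t+\Gamma_t)>\epsilon\sqrt m)\to 0$, which does not follow from bounded second moments alone. It does follow from, e.g., uniformly bounded fourth moments of $(\Lambda_t,\Gamma_t)$ (obtainable from the same ODE machinery as Proposition 2.2) together with a union bound over the $O(m)$ unit subintervals of $[0,mT']$, using $\sup_{t\in[k,k+1]}(\Lambda_t-\lambda_0)\le e^{b}(\Lambda_{k+1}-\lambda_0)$ and $\sup_{t\in[k,k+1]}\Gamma_t\le\Gamma_k+(L_{k+1}-L_k)$. Until such an estimate is supplied, the drift bound is not closed.
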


\begin{proof} Let
\[
M_t=N_t-c\int_0^t \Gamma_s\,ds, \quad
A_t=c\int_0^t (\Gamma_s-g_s)\,ds,\quad
B_t=c^2\int_0^t \Gamma_s\,ds.
\]
Then $M_t$ is a $(\P,\Fc)$-martingale, $A_t$ is the drift and
$\langle M,M\rangle_t=B_t$ is the quadratic variation in the
semimartingale decomposition of $\bar N_t$, given by $\bar N_t=A_t+M_t$,
$t\ge 0$.   Let $(\tau_m)_{m\ge 1}$ be a family of $(\Fc)$-stopping times
all bounded by some constant $T$, $\sup_m \tau_m\le T$.
To verify the Aldous-Rebolledo tightness criterion (see e.g. \cite{etheridge}) we will show
that for each $\epsilon>0$ there exist $\delta>0$ and an integer $m_0$ such that
\begin{equation}\label{tightness}
\sup_{m\ge m_0}\sup_{h\in
  [0,\delta]}\P\Big(\Big|\frac{A_{m(\tau_m+h)}-A_{m\tau_m}}{\sqrt{m}}\Big|>
\epsilon\Big)\le \epsilon,
\end{equation}
and furthermore that the same boundedness property holds when $A_t$ is
replaced by $B_t$ in (\ref{tightness}).

 By Chebyshev's inequality,
\[
\P\Big(\big|
A_{m(\tau_m+h)}-A_{m\tau_m}\big|\ge \epsilon\sqrt{m}\Big) \le
\frac{c^2}{\epsilon^2 } \E\Big[\Big(\int_{\tau_m}^{\tau_m+h}
(\Gamma_{ms}-g_{ms})\,ds\Big)^2\Big]
%= \frac{1}{\epsilon^2} \mathrm{Var}\big[N_{\tau+h}-N_\tau\big].
\]
%\[
%\P\Big(\big|\bar N_{\tau+h}-\bar N_\tau\big|\ge \epsilon\Big) \le
%\frac{1}{\epsilon^2} \E\big[|\bar N_{\tau+h}-\bar N_\tau|^2\big]
%= \frac{1}{\epsilon^2} \mathrm{Var}\big[N_{\tau+h}-N_\tau\big].
%\]
%\[
%\P\Big(\big|
%\bar N_{m(\tau_m+h})-\bar N_{m\tau_m}\big|> \epsilon\sqrt{m}\Big) \le
%\frac{1}{m\epsilon^2} \mathrm{Var}\big[N_{m(\tau_m+h)}-N_{m\tau_m}\big]
%\]
Without restricting the scope of the proof we may take $h\le 1$. Put
$T'=T+1$.  By Hölder's inequality,
\[
\Big(\int_{\tau_m}^{\tau_m+h} (\Gamma_{ms}-g_{ms})\,ds\Big)^2
\le \int_0^{T'}(\Gamma_{ms}-g_{ms})^2\,ds\, \cdot\int_{\tau_m}^{\tau_m+h}
\,ds
=\int_0^{T'}(\Gamma_{ms}-g_{ms})^2\,ds\, \cdot h,
\]
and combining the two previous bounds
\[
\P\Big(\big|
A_{m(\tau_m+h)}-A_{m\tau_m}\big|\ge \epsilon\sqrt{m}\Big)
 \le \frac{c^2}{\epsilon^2}
 \int_0^{T'} \mathrm{Var}(\Gamma_{ms})\,ds\, \cdot h.
\]
The function $\bar r_t=\mathrm{Var}(\Gamma_t)$ with $\bar r_\infty<\infty$
obtained in (), is bounded on the real line by $\bar r_\mathrm{sup}=
\sup_{t\ge 0}\mathrm{Var}(\Gamma_t) <\infty$.  Thus, for any $m$,
\[
\sup_{h\in [0,\delta]} \P\Big(\big|
A_{m(\tau_m+h)}-A_{m\tau_m}\big|\ge \epsilon\sqrt{m}\Big)
\le \frac{c^2}{\epsilon^2}\, T'\, \bar r_\mathrm{sup}\,\delta.
\]
Take $\delta=\epsilon^3/(c^2 (T+1) \bar r_\mathrm{sup})$ to obtain
(\ref{tightness}) for the drift process $A_t$.  The same arguments
using $r_\mathrm{sup}=\sup_{t\ge 0} \E[\Gamma_t^2]<\infty$ instead of
$\bar r_\mathrm{sup}$ shows that (\ref{tightness}) holds for the
quadratic variation process $B_t$.
\end{proof}

Now, we are ready to prove the diffusion limit of $N^{(m)}$ in the following theorem where asymptotic variance is found in terms of the parameters of the buffer-Hawkes process.
\begin{theorem}
$\{N^{(m)}_t\}_{t\ge 0}$ converges weakly as
$m\to\infty$ in the space $D([0,\infty),\R)$ of realvalued c\`adl\`ag
processes to Brownian motion with variance coefficient
\[
\sigma^2=\frac{\lambda_0c}{c+d}\frac{1}{(1-\nu)^3}.
\]
\end{theorem}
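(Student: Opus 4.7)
The plan is to combine Lemma \ref{onlylem} (tightness of $\{N^{(m)}_\cdot\}$ in $D([0,\infty),\R)$) with convergence of the finite-dimensional distributions to those of $\sigma B$. Once tightness is in hand, by standard Prohorov-type arguments it suffices to show that for any $0\le t_1<\cdots<t_k$ the joint law of $(N^{(m)}_{t_1},\dots,N^{(m)}_{t_k})$ converges to the centered Gaussian with covariance $\sigma^2\min(t_i,t_j)$. The natural tools are the branching representation \eqref{marketorders} and the cumulant formula \eqref{logmgfN}.

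First I would establish one-dimensional convergence via Laplace transforms. For each $\theta\in\R$ and $m$ large enough that $\theta/\sqrt m\le \theta_0$, the uniform exponential bound following \eqref{logmgfN}, together with \eqref{finitecumulant}, justifies evaluating \eqref{logmgfN} at $\theta/\sqrt m$ and $t\mapsto mt$. Using $V_u(0)=0$, $V_u'(0)=x_u$, $V_u''(0)=y_u$ read off the ODE \eqref{logmgfode}, a Taylor expansion gives
\[
e^{V_u(\theta/\sqrt m)}-1=\frac{\theta x_u}{\sqrt m}+\frac{\theta^2(x_u^2+y_u)}{2m}+o(1/m),
\]
uniformly in $u$. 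After subtracting $\theta\E N_{mt}/\sqrt m$ to center, the $1/\sqrt m$ contribution cancels and, using $x_u\to x_\infty$, $y_u\to y_\infty$ with dominated convergence,
\[
\ln \E\bigl[\exp(\theta N^{(m)}_t)\bigr]\longrightarrow \frac{\theta^2}{2}\cdot\frac{\lambda_0 c}{c+d}(x_\infty^2+y_\infty)\,t=\frac{\sigma^2\theta^2 t}{2},
\]
since $x_\infty^2+y_\infty=1/(1-\nu)^3$. This identifies the one-dimensional limit as the centered Gaussian with variance $\sigma^2 t$.

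Second I would extend to multiple times and verify the independent-increments property. Fix $s<t$ and decompose $N_t-N_s$ via the Poisson measure $\Nc$ into (a) progeny of immigrants with $s_i+u_i\in(s,t]$ and (b) additional progeny in the interval $(s,t]$ along trees rooted at immigrants with $s_i+u_i\le s$. Piece (a) depends only on the restriction of $\Nc$ to $\{(s',u'):s<s'+u'\le t\}$, hence is independent of $\Fc_s$ and of $N_s$; its joint asymptotic analysis over disjoint intervals parallels Step 1 and yields independent Gaussian increments with the correct variance. Piece (b) has second moment controlled by $\int_0^s w_{s-r}(t-s)\,dr$ up to the bounded marginal density of effective birth times $s_i+u_i$. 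By Proposition \ref{prop_integralw} this integral is uniformly bounded in $s$ and $t-s$, so after diffusive rescaling piece (b) contributes $o(1)$ to $\mathrm{Var}((N_{mt}-N_{ms})/\sqrt m)$ and its cross-covariances with other increments vanish. Applying the cumulant computation of Step 1 to arbitrary linear combinations $\sum_j \theta_j(N_{mt_j}-N_{mt_{j-1}})/\sqrt m$ restricted to the independent (a)-pieces of each increment then yields asymptotic Gaussianity with diagonal covariance $\sigma^2(t_j-t_{j-1})$.

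The main obstacle is Step 2. Without the uniform-in-$t$ integrability from Proposition \ref{prop_integralw}, late-blooming offspring of early immigrants could in principle induce $O(\sqrt m)$ cross-increment correlations after scaling and destroy the independent-increments structure; the Proposition is precisely what rules this out. Combining the finite-dimensional Gaussian convergence established in Steps 1--2 with the tightness of Lemma \ref{onlylem} yields the claimed weak convergence of $N^{(m)}_\cdot$ to $\sigma B$ in $D([0,\infty),\R)$.
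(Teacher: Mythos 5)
Your proposal is correct, and its skeleton (branching representation \eqref{marketorders}, the cumulant formula \eqref{logmgfN}, Taylor expansion of $V_u(\theta/\sqrt m)$, tightness from Lemma \ref{onlylem}) matches the paper's; the genuine difference is in how you handle the multi-time distributions. The paper never decomposes into increments: it writes the \emph{joint} cumulant $\ln\E\exp\{\sum_i\theta_i\bar N_{t_i}\}$ as a single Poisson integral of $\E[\Phi(\sum_i\theta_i Z_{t_i-s-u}1_{\{s+u\le t_i\}})]$, Taylor-expands $\Phi$, controls the remainder by H\"older together with the exponential moment bound \eqref{finitecumulant}, and reads off the covariance $\sigma^2(t_i\wedge t_j)$ directly from $\E[Z_{m(t_i-s)-u}Z_{m(t_j-s)-u}]\to\E[Z_\infty^2]$; asymptotic independence of increments is never isolated as a separate claim. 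You instead split each increment into the progeny of immigrants executed inside the interval (exactly independent across disjoint intervals, by the restriction property of $\Nc$) and the residual growth of earlier clusters, and kill the latter in $L^2$ using the uniform bound $\int_0^\infty w_r(t)\,dr<\infty$ of Proposition \ref{prop_integralw} — a result the paper only deploys later, for the stationary-increments version. Both routes are sound. The paper's joint-cumulant computation is more economical (one integral formula does all the covariance bookkeeping and needs no input beyond \eqref{finitecumulant}--\eqref{finitemoment}), while your decomposition makes the emergence of independent increments explicit and shows concretely that the cross-increment correlation caused by late offspring of early immigrants is $O(1)$ before scaling, which is a nice structural insight; the price is the extra dependence on Proposition \ref{prop_integralw} and a little more care in checking that the centering of the negligible piece is itself uniformly bounded (it is, since $x_\infty-x_r$ decays exponentially).
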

\begin{proof} We show convergence of the finite-dimensional distributions.
Put
\[
\Phi(x)=e^{x}-1-x.
\]
The cumulant function of $\bar N_t$,
\begin{align*}
\ln \E \exp\{\theta \bar N_t\}&=
  \int_{\R_+^2} 1_{\{s+u\le t\}} \E[\Phi(\theta Z_{t-s-u})]\,n(ds,du)\\
    &=   \int_{\R_+^2} 1_{\{s+u\le t\}}
    \big(e^{V_{t-s-u}(\theta)}-1-\theta x_{t-s-u}\big) \,n(ds,du),
\end{align*}
exists finitely for each $\theta\le \theta_0$, due to
(\ref{finitecumulant}), (\ref{finitemoment}).
%and
%\begin{align*}
%X_t-x_t&\stackrel{d}{=}\int_{\R_+^2} 1_{\{r+v\le t\}} X'_{t-r-v}\,\Mc(dr,dv)
%       -\int_{\R_+^2} 1_{\{r+v\le t\}} x_{t-r-v}\,m(dr,dv)\\
%      &=\int_{\R_+^2} 1_{\{r+v\le t\}} (X'_{t-r-v}-x_{t-r-v})\,\Mc(dr,dv)
%       +\int_{\R_+^2} 1_{\{r+v\le t\}} x_Q{t-r-v}\,\widetilde\Mc(dr,dv),
%\end{align*}
%where $X'$ is an independent copy of the branching process $X$.
More generally,
\[
\ln \E \exp\Big\{\sum_{i=1}^n \theta_i \bar N_{t_i}\Big\}=
  \int_{\R_+\times \R_+} \E\Big[\Phi\Big(\sum_{i=1}^n
  \theta_i Z_{t_i-s-u}\,1_{\{s+u\le t_i\}}\Big)\Big] \,n(ds,du)
\]
is well-defined for $0\le t_1\le \dots\le t_n$ and
$\theta_1,\dots,\theta_n$, $n\ge 1$, with $\sum_{k=1}^n\theta_k\le
\theta_0$.
Under scaling with scaling parameter
$m\to\infty$, Hölder's inequality with $p,q>1$,
$\frac{1}{p}+\frac{1}{q}=1$ applies to control the remainder term using
\[
m\E\Big[\Big|\Phi(\frac{X}{\sqrt{m}})-\frac{1}{2}\frac{X^2}{m}\Big|\Big]\le
  \frac{1}{\sqrt{m}} \,\E\big[|X|^3\,e^{|X|}\big]
  \le \frac{1}{\sqrt{m}} \,\E\big[|X|^{3p}\big]^{1/p}\,\E\big[e^{q|X|}\big]^{1/q},
\]
for a generic random variable $X$. Indeed, with
\[
0<\theta_0'<\theta_0, \quad  \sum_{k=1}^n\theta_k\le \theta_0'\quad q=
\frac{\theta_0}{\theta_0'}>1,
\]
and  using (\ref{finitemoment}), for large $m$,
\begin{align*}
\ln \E &\exp \Big\{i\sum_{i=1}^n \theta_i N^{(m)}_{t_i}\Big\}\\
&= -\frac{1}{2m}  \int_{\R_+\times \R_+} \E\Big[\Big(\sum_{i=1}^n
  \theta_i Z_{mt_i-s-u}\,1_{\{s+u\le mt_i\}}\Big)^2 \Big]
  \,n(ds,du)+O(\frac{1}{\sqrt{m}})\\
&\;\sim -\frac{1}{2}\sum_{1\le i,j\le n} \theta_i\theta_j
 \int_{\R_+\times \R_+} \E [Z_{m(t_i-s)-u} Z_{m(t_j-s)-u}]\,
1_{\{s+u/m\le (t_i\wedge t_j)\}}   \,n(ds,du).
\end{align*}
As  $m\to\infty$, the leading
term on the right hand side converges to
\[
 -\frac{1}{2}\sum_{1\le i,j\le n} \theta_i\theta_j
 \int_0^{t_i\wedge t_j}\int_0^\infty  \E(Z_\infty^2)
  \,n(ds,du)= -\frac{\sigma^2}{2}
\sum_{1\le i,j\le n} \theta_i\theta_j\, (t_i\wedge t_j)
\]
and hence
\begin{align*}
\ln \E \exp\Big\{i\sum_{i=1}^n \theta_i N^{(m)}_{t_i}\Big\}
 \to
%-\frac{1}{2}\sum_{1\le i,j\le n} \theta_i\theta_j
% \int_0^{t_i\wedge t_j}\int_0^\infty  (1+2\E(Z_\infty)+\E(Z_\infty^2))
%  \,n(ds,du)\\
  -\frac{\sigma^2}{2}%\frac{\lambda_0c}{c+d}(1+2x_\infty+y_\infty)
\sum_{1\le i,j\le n} \theta_i\theta_j\, (t_i\wedge t_j).
\end{align*}
In view of Lemma \ref{onlylem}, the proof is complete.
\end{proof}

\subsection{Price process and its long-time scaling}

Suppose $N^+$ and $N^-$ are copies of the above buffer-Hawkes process,
representing market call order and market bid order executions,
respectively.  These two counting processes are naturally associated with up
and down movements of the trading price of the underlying asset.
The simplest trading price is the midprice formed as in \cite[pg.74]{Zaatour} by
\[
S_t=\left(N^{+}_t-N^{-}_t\right)\, \frac{\alpha}{2},
\]
where $+$ stands for up, and $-$ stands for down, and $\alpha>0$ is a
tick price parameter, and we have taken $S_0=0$ for simplicity.

When $N^+$ and $N^-$ are independent from each other and identical in distribution, consider $S^{(m)}_t= {  S_{mt}}/{\sqrt{m}}$.  As a result of the diffusion limit for each $N^+$ and $N^-$, we have that
$\{S^{(m)}_t\}_{t\ge 0}$ converges weakly as
$m\to\infty$ in the space $D([0,\infty),\R)$ of real valued c\`adl\`ag
processes to Brownian motion with variance coefficient $\beta$, namely, the asymptotic volatility. For the proof of this fact, we observe the semi-martingale decomposition
\[
N^{+}_t-N^{-}_t=N^{+}_t-c\int_0^t \Gamma^+_s ds \; -\;
N^{-}_t+c\int_0^t \Gamma^-_s ds + A_t
\]
where  the drift is given by
\[
A_t = c\int_0^t \Gamma_s^{+}\, ds - c\int_0^t \Gamma_s^{-}\, ds,
\]
and the quadratic variation of the martingale part of $N^+-N^-$ is
\[
B_t:=c^2\int_0^t \Gamma_s^{+}\, ds + c^2\int_0^t \Gamma_s^{-}\, ds
\]
Therefore, the tightness proof is similar to that of $N^+$ or $N^-$. We get   the asymptotic volatility as
\[
\beta:=\frac{\alpha^2\sigma^{2}}{2}   = \frac{\alpha^2\lambda_0 c}{2(c+d)(1-\nu)^3}
\]
where $\nu=\frac{ac}{b(c+d)}<1$ by \eqref{nu}. Note that the
volatility increases as $\nu$ increases due to an increase in either
$a/b$ or $c/d$ in the buffer-Hawkes model, whereas the volatility
solely depends on $a/b$ in the Markovian Hawkes process
\cite[pg.75]{Zaatour}.

\subsection{Stationary increments version of $N_t$ and a covariance
  formula.} \label{stationary}

The model considered so far starts with an empty limit order book at
time zero, $\Gamma_0=0$. The transition of $\Gamma_t$ during a
start-up phase from its initial value to approaching the
steady-state $\Gamma_\infty$ imposes a non-stationary behavior to $N$.
In this section, we construct a version of $N_t$ with stationary
increments. Of course, the diffusion approximation will be the same as
for the original process.  Apart from achieving linear increase in $t$
of the expected value, the main advantage is an explicit
expression for the covariance along the paths of the $N$ process.

Recall that in the original model each market order occurs at a time
$s+u$, where $s>0$ is the entry time of a limit order in the LOB and
$u$ is the occupation time in the book until the order is
executed. The process $N_t$ counts all orders with $s+u\le t$,
including $N_t^{(0)}$ as well as additional market orders accounted
for by $Z_{t-s-u}^{(s,u)}-1$ in (\ref{marketorderssplit}).  To obtain a
version $\widetilde N_t$ of $N_t$ with stationary increments we extend
the construction of the buffer-Hawkes process $X$ using a Poisson
measure $M$ defined on $\R\times \R_+$ in (\ref{Npoissondef}), make the
corresponding adjustments elsewhere whenever needed, and move the
initiation time of the limit orders backwards to include orders placed
at times $s<0$. For such an $s$, if $s+u\le 0$ then the resulting
number of market orders in $[0,t]$ is given by
$Z^{(s,u)}_{t-s-u}-Z^{(s,u)}_{0-s-u}$. If $0<s+u\le t$ then as before
the contribution to $\widetilde N_t$ is $Z^{(s,u)}_{t-s-u}$. Upon
summing over all Poisson points in $\Nc$ we define
\[
\widetilde N_t=\int_{\R\times \R_+} \big\{ 1_{\{s+u\le 0\}}
(Z_{t-s-u}^{(s,u)}-Z^{(s,u)}_{0-s-u})+1_{\{0<s+u\le t\}}
Z_{t-s-u}^{(s,u)}\big\}\,\Nc(ds,du)
,\quad t\ge 0.
\]
Here, re-ordering terms,
\begin{align*}
\widetilde N_{r+t}- \widetilde N_r
&=\int_{\R\times \R_+} \big\{ 1_{\{s+u\le 0\}}(Z_{r+t-s-u}^{(s,u)}-Z^{(s,u)}_{0-s-u})
+1_{\{0<s+u\le r+t\}} Z_{r+t-s-u}^{(s,u)}\big\}\,\Nc(ds,du)\\
&\quad -\int_{\R\times \R_+}
\big\{ 1_{\{s+u\le 0\}}(Z_{r-s-u}^{(s,u)}-Z^{(s,u)}_{0-s-u})+1_{\{0<s+u\le r\}}
  Z_{r-s-u}^{(s,u)}\big\}\,\Nc(ds,du)\\
&=\int_{\R\times \R_+} \big\{ 1_{\{s+u\le r\}}(Z_{r+t-s-u}^{(s,u)}-Z^{(s,u)}_{r-s-u})
+1_{\{r<s+u\le r+t\}} Z_{r+t-s-u}^{(s,u)}\big\}\,\Nc(ds,du)
%\\&\quad +\int_{\R\times \R_+} 1_{\{0<s+u\le r\}}
%  (Z^{(s,u)}_{r+t-s-u}-Z_{r-s-u}^{(s,u)})\,\Nc(ds,du)
%\stackrel{d}{=}\widetilde N_t.
\end{align*}
and hence
by the
shift-invariance of $\Nc(ds,du)$ with respect to $s$ we obtain the
desired stationary increments property
\[
\widetilde N_{r+t}- \widetilde N_r\stackrel{d}{=}\widetilde N_t.
\]

The stationary mean value is obtained as
\begin{align*}
\widetilde m_t&=\E(\widetilde N_t)=m_t+
\int_{\R\times \R_+}  1_{\{s\le 0,0\le s+u\le t\}}
x_{t-s-u}\,n(ds,du)\\
&\quad +\int_{\R\times \R_+}  1_{\{s+u\le 0\}}
(x_{t-s-u}-x_{0-s-u})\,n(ds,du)\\
&=\frac{\lambda_0 c}{c+d}\int_0^t x_s\,ds
   +\frac{\lambda_0 c}{c+d}\frac{ac}{q_+-q_-}
\int_0^t(\frac{e^{-q_-v}}{q_-}-\frac{e^{-q_+v}}{q_+}) \,dv\\
&=\dots =\frac{\lambda_0c bt}{b(c+d)-ac}
\end{align*}
and the variance is
\begin{align*}
\mathrm{Var}\widetilde N_t&=
\int_{\R\times \R_+}  1_{\{0\le s+u\le t\}}
\E[Z_{t-s-u}^2]\,n(ds,du)\\
&\quad +\int_{\R\times \R_+}  1_{\{s+u\le 0\}}
\E[(Z_{t-s-u}-Z_{0-s-u})^2]\,n(ds,du)\\
&=\frac{\lambda_0 c}{c+d}\int_0^t \E[Z_s^2] \,ds
   +\frac{\lambda_0 c}{c+d} \int_0^\infty \E[(Z_{r+t}-Z_r)^2]\,dr.
\end{align*}
By Proposition \ref{prop_integralw}, the second term in
\begin{align*}
\mathrm{Var}\widetilde N_t
&=\frac{\lambda_0 c}{c+d}\int_0^t (x_s^2+y_s)\,ds
   +\frac{\lambda_0 c}{c+d} \int_0^\infty w_r(t)\,dr
\end{align*}
vanishes in the scaling limit, and
\begin{align*}
\frac{1}{m}\mathrm{Var}\widetilde N_{mt}
%&=\frac{\lambda_0 c}{c+d}\int_0^t (x_{ms}^2+y_{ms})\,ds
%   +\frac{\lambda_0 c}{c+d} \frac{1}{m}\int_0^\infty w_r(mt)\,dr\\
&\to \frac{\lambda_0 c}{c+d}(x_\infty^2+y_\infty)\,t
=\frac{\lambda_0 c}{c+d}x_\infty^3\,t.
\end{align*}
Since the increments are stationary, for $s\le t$,
\begin{align*}
\mathrm{Cov}(\widetilde N_s,\widetilde N_t-\widetilde N_s)
&=\frac{1}{2}(\mathrm{Var}\widetilde N_t-\mathrm{Var}\widetilde
N_s-\mathrm{Var}\widetilde N_{t-s})\\
&=\frac{\lambda_0 c}{2(c+d)}\Big(
\int_s^t(x^2_u-x^2_{u-s})\,du+\int_s^t(y_u-y_{u-s})\,du\\
&\qquad\qquad  +\int_0^\infty (w_r(t)-w_r(s)-w_r(t-s))\,dr\Big).
\end{align*}
Clearly, $\mathrm{Cov}(\widetilde N_{ms},\widetilde N_{mt}-\widetilde
  N_ {ms})/m\to 0$, $m\to\infty$, in agreement with the Brownian
  motion scaling limit.

\section{Conclusions and Outlook}

With the emphasis in our model on self-exciting features and mutual
excitation of market events, it is natural to look further at the
implications of these mechanisms for applications in financial
mathematics and quantitative finance computations. To indicate
potential use of the model developed in this paper, we conclude by
mentioning a few examples.

\subsection*{Parameter estimation}

Suppose we extract trading data for a single asset and tentatively
apply the Markov model $(X_t)$ to capture its evolution over time.
The parameter ratio $c/(c+d)$ corresponds to the ratio of executed
versus cancelled limit orders, which is a directly observable
quantity. Also available is the average number of registered limit
orders, represented by $\E
\Gamma_\infty=\lambda_0(c+d)^{-1}(1-\nu)^{-1}$. The coefficient of
variation, that is, the ratio of sample variance to sample mean, of
many observed $N_{t+1}-N_t$, say, would moreover give a point estimate
of $\nu$, and hence of $a/b$.  It would be a separate investigation to
look into efficient and reliable estimation procedures set up along
these lines.

\subsection*{Price formation process}

During trading, at the time $s$ of a market order execution there is a
supply of $\Gamma_s$ limit orders in the LOB, each equipped with a bid
or call price. The market price change, up or down, is determined by
the most favorable offer in the LOB, which we may think of as a
minimum of the current limit order entries.  It is reasonable,
therefore, that the change in price will be inversely proportional to
$\Gamma_s$.  Thus, a more elaborate attempt to model the link between the
number of market orders and the trading price could lead to price
processes of the type
\[
S_t=\Big(\int_0^t \frac{1}{\Gamma^+_{s-}}\,1_{\{\Gamma^+_{s-}\ge 1\}}\,dN^+_s
    -\int_0^t \frac{1}{\Gamma^-_{s-}}\,1_{\{\Gamma^-_{s-}\ge 1\}}\,dN^-_s\Big)\frac{\alpha}{2}.
\]
%The alternative trading  price process
%\[
%S_t=\int_0^t \ln(1+\alpha/\Gamma^+_{s-})\,dN^+_s
%    -\int_0^t \ln(1+\alpha/\Gamma^-_{s-})\,dN^-_s
%\]
%corresponds to a multiplicative version of price formation.
The moments and long time behavior of the price can be studied under this model.

\subsection*{Geometric buffer-Hawkes process}

In parallel with geometric Poisson processes, see e.g. \cite[Ch.\
11]{shreve2004},  consider a price process of the form
\[
S_t=S_0 (1+\sigma)^{N_t} e^{\alpha t-\sigma \int_0^t c\Gamma_s\,ds},
\]
where $\sigma>-1$ is a constant. Then
$S_s-S_{s-}=\sigma S_{s-}\,dN_s$, and hence the discounted price
process $\bar S_t=e^{-\alpha t} S_t$ satisfies
\begin{align*}
\bar S_t&=\bar S_0 -\sigma c\int_0^t \bar S_s\Gamma_s\,ds
+\sigma \int_0^t \bar S_{s-}\,dN_s\\
&= S_0+\sigma \int_0^t \bar S_{s-}\,(dN_s-c \Gamma_s\,ds)
\end{align*}
which shows that $\bar S_t$ is a martingale with $\E \bar S_t=S_0$ and
mean return $\alpha$.
%To continue on this example, given an interest
%rate $r$ construct a change-of-measure

\subsection*{Non-Markovian extensions}

While our construction concerned the Markov process $(X_t)$, of course
the point process component $(N_t)$ is non-Markovian in its own
sake. The branching representation of $N$ directly provides further
extensions in analogy to those in (\ref{hawkes}). For example,
power-law kernels are used in current studies of high-frequency
trading data \cite{zhang2016}.  In
(\ref{branching}), consider a non-negative function $\phi$ on $\R_+$, let
$\Mc$ be a Poisson measure with intensity process
$m(ds,du)=\phi(s)\,c^ {-(c+d)u}\,du$, and replace (\ref{nu}) by the
condition
\[
\nu=\frac{c}{c+d}\int_0^\infty \phi(s)\,ds <1.
\]
Now, let $Z$ be the subcritical branching process related to $\Mc$ via
(\ref{branching}) and, as before, generate $N$ as in (\ref{marketorders}).
Some of the properties of $N$ we have studied have direct
counterparts in this more general situation, some would need to be
studied in greater detail.

\subsection*{Multi-variate extensions}

Multidimensional versions can be considered for modeling several
assets as in e.g.\ \cite{embrechtsetal2011,AberJedi2015}.  Here, we
mention the two-dimensional case with two buffer-Hawkes processes for
buying and selling orders, $X^+_t=(\Lambda^+_t,\Gamma_t^+, N^+_t)$ and
$X^-_t=(\Lambda^-_t,\Gamma^-_t, N^-_t)$, respectively.  The component
processes are defined as previously except that the execution of buy
orders trigger the arrival of sell orders, and vice versa, which
suggests the semimartingale relations
\[
\left\{
\begin{array}{l}
d\Lambda^+_t= b(\lambda_0-\Lambda^+_t)\,dt+ac\Gamma^-_t\,dt
+d\widetilde X^+_t[f_1]\\
d\Gamma^+_t= (\Lambda^+_t-(c+d)\Gamma^+_t)\,dt+d\widetilde X^+_t[f_2]
\\
dN^+_t= c\Gamma^+_t\,dt+d\widetilde X^+_t[f_3]
\end{array}
\right.
\]
and
%\;\;
\[
\left\{
\begin{array}{l}
d\Lambda^-_t= b(\lambda_0-\Lambda^-_t)\,dt+ac\Gamma^+_t\,dt
+d\widetilde X^-_t[f_1]\\
d\Gamma^-_t= (\Lambda^-_t-(c+d)\Gamma^-_t)\,dt+d\widetilde X^-_t[f_2]
\\
dN^-_t= c\Gamma^-_t\,dt+d\widetilde X^-_t[f_3],
\end{array}
\right.
\]
with $(\widetilde X^+[\cdot],\widetilde X^-[\cdot])$ a six-component
martingale, compare section \ref{sec:2.2}.

%\newpage
 \vskip 1cm


\begin{thebibliography}{00}


\bibitem{alfonsiblanc2016}
{\sc A. Alfonsi,  P. Blanc}, \emph{Dynamic optimal execution in a
  mixed-market impact Hawkes price model}, Finance Stoch., 20 (2016),
 pp. 183-218.

\bibitem{abergelbook2016} {\sc F. Abergel, M. Anane, A. Chakraborti,
  A. Jedidi, I. Muni Toke}, \emph{Limit Order Books}, Cambridge
  University Press, Cambridge, 2016.
%doi:10.1017/CBO9781316683040

\bibitem{AberJedi2015} {\sc F. Abergel, A. Jedidi},  \emph{Long-Time Behavior of a Hawkes Process-Based Limit Order Book}, SIAM J. Finan. Math., 6 (2015), pp. 1026-1043.

\bibitem{survey}
{\sc E. Bacry, I. Mastromatteo, J.F. Muzy}, \emph{Hawkes processes in finance},
Market Microstructure and Liquidity  01 (2015), No. 01, 1550005.

\bibitem{bacryetal2013}
{\sc E. Bacry, S. Delattre, M. Hoffmann, J.F. Muzy}, \emph{Some limit theorems
for Hawkes processes and application to financial
statistics}, Stochastic Process. Appl., 123 (2013), pp.
2475-2499.

\bibitem{bremaudmass1996} {\sc P. Bremaud, L. Massoulie}, \emph{Stability of
nonlinear Hawkes processes}, Ann.  Probab., 24 (1996), pp. 1563-1588.

\bibitem{carteaetal2014} {\sc A. Cartea, S. Jaimungal, R. Jason},
\emph{Buy Low, Sell High: A High Frequency Trading Perspective}, SIAM J. Finan. Math., 5 (2014), pp.  415-444.

\bibitem{quantfinance2011} {\sc A. Chakraborti, I. Muni Toke, M. Patriarca,  F. Abergel}, \emph{Econophysics review: I. Empirical facts}, Quant. Finance, 11 (2011), pp. 991-1012.

\bibitem{cinlar2011}
{\sc E. \c{C}inlar},  \emph{Probability and Stochastics}, Springer Verlag, New York, 2011.

\bibitem{contlarrard2013} {\sc R. Cont, A. de Larrard}, \emph{Price Dynamics in a Markovian
  Limit Order Market},  SIAM J. Finan. Math., 4 (2013), pp. 1-25.

\bibitem{Zaatour} {\sc J. Da Fonseca, R. Zaatour}, \emph{Hawkes process: Fast
  calibration, application to trade clustering, and diffusive
  limit}, Journal of Futures Markets, 34 (2014), pp. 548-579.

\bibitem{contag} {\sc A. Dassios, H. Zhao}, \emph{A Dynamic Contagion Process}, Adv. Appl. Prob., 43 (2011), pp. 814-846.

\bibitem{etheridge} {\sc A. Etheridge}, \emph{An introduction to superprocesses}, AMS University Lecture Series, Vol.20, 2000.

\bibitem{eisleretal2012} {\sc Z. Eisler}, {\sc J.-P. Bouchaud}, {\sc J. Kockelkoren},
\emph{Models for the impact of all order book events}, in Market Microstructure:
  Confronting Many Viewpoints, Ed. F. Abergel et al., Wiley, 2013 , pp. 113-135.

\bibitem{embrechtsetal2011}
{\sc P. Embrechts, T. Liniger, L. Lu}, \emph{Multivariate Hawkes processes: an
application to financial data}, J. Appl. Probab. Spec. 48A (2011), pp. 367-378.

\bibitem{GaoZhu} {\sc X. Gao, L. Zhu}, A functional central limit theorem for stationary Hawkes processes and its application to infinite-server queues, Preprint (2017), arXiv:1607.06624v2.

\bibitem{lobsurvey} {\sc M.D. Gould, M.A. Porter, S. Williams, M. McDonald, D.J. Fenn, S.D.
Howison},  Limit order books, Quant. Finance, 13 (2013), pp. 1709-1742.

\bibitem{hawkesoakes1974}
{\sc A.G. Hawkes, D. Oakes} \emph{A cluster process representation of a
self-exciting process}, J. Appl. Prob., 11 (1974), pp. 493-503.

\bibitem{jiaomascotti2017}
{\sc Y. Jiao, C. Ma, S. Scotti}, \emph{Alpha-CIR model with branching processes
in sovereign interest rate modeling}, Finance Stoch., 21 (2017), pp. 789-813.
%\bibitem{J}
%Jacod and Shriyaev, Limit Theorems for Stochastic Processes.

%\bibitem{kelbertetal2013}
%M. Kelbert, N. Leonenko, V. Belitsky, On the Bartlett spectrum of
%randomized Hawkes processes. Mathematical Communications 18 (2013) 393-407.

\bibitem{shreve2004}
{\sc S.E. Shreve}, \emph{Stochastic Calculus for Finance II}, 
Springer, New York, 2004.

\bibitem{zhang2016} {\sc B. Zhang}, \emph{Modeling high frequency data using
  Hawkes processes with power-law kernels}, Procedia Computer Science
  80 (2016), pp. 762-771.

\bibitem{zhengetal2014}
{\sc B. Zheng, F. Roueff, F. Abergel},
\emph{Modelling Bid and Ask Prices Using Constrained Hawkes Processes:
Ergodicity and Scaling Limit}, SIAM J. Finan. Math., 5 (2014), pp. 99-106.

\bibitem{zhu2015} {\sc L. Zhu},\emph{ Large deviations for Markovian nonlinear Hawkes
  processes}, Ann. Appl. Probab., 25 (2015), pp. 548-581.



\end{thebibliography}
\end{document}